\numberwithin{equation}{section}
\newtheorem{theorem}{Theorem}[section]
\newtheorem{lemma}[theorem]{Lemma}
\newtheorem{definition}[theorem]{Definition}
\newtheorem{remark}[theorem]{Remark}
\newcommand{\SL}{\mathop{\mathrm{SL}}}
\newcommand{\ASL}{\mathop{\mathrm{ASL}}}
\newcommand{\Sp}{\mathop{\mathrm{Sp}}}
\newcommand{\alt}{\mathop{\mathrm{Alt}}}
\newcommand{\aut}{\mathop{\mathrm{Aut}}}
\newcommand{\inn}{\mathop{\mathrm{Inn}}}
\newcommand{\soc}{\mathrm{soc}}
\newcommand{\sym}{\mathop{\mathrm{Sym}}}
\newcounter{claim}[theorem]
\title[Star normal quotients with TW type]{A new infinite family of star normal quotient graphs of twisted wreath type}
\author{Eda Kaja, Luke Morgan}
\thanks{The first author received funding
from the European Research Council (ERC) under the European
Union’s Horizon 2020 research and innovation programme
(EngageS: grant agreement No. 820148).
The second author is supported in part by the Slovenian Research Agency (research program P1-0285 and research projects J1-1691, N1-0160, J1-2451, N1-0208, J3-3001, J3-3003). We are grateful to Michael Giudici and Eric Swartz for sharing a manuscript with us.}
\address{Eda Kaja \\ TU Darmstadt, S2|15 217, Schlossgartenstrasse 7, 64289 Darmstadt}
\email{kaja@mathematik.tu-darmstadt.de}
\address{Luke Morgan \\ University of Primorska, UP FAMNIT, Glagolja\v{s}ka 8, 6000 Koper, Slovenia, and University of Primorska, UP IAM, Muzejski trg 2,  6000 Koper, Slovenia.}
\email{luke.morgan@famnit.upr.si}
\subjclass[2010]{Primary 20B25; Secondary 05E18}
\keywords{group, graph, locally $2$-arc transitive graphs, quasiprimitive groups, twisted wreath products}
\begin{document}

\begin{abstract}
We construct the first infinite families of locally arc transitive graphs with the property that the automorphism group has two orbits on vertices and is quasiprimitive on exactly one orbit, of twisted wreath type. This work contributes to Giudici, Li and Praeger's program for the classification of locally arc transitive graphs by showing that the star normal quotient twisted wreath category also contains infinitely many graphs.
\end{abstract}

 \maketitle
 
 \section{Introduction}
Let $\Gamma$ be a connected, finite, simple graph with vertex set $V\Gamma$, edge set $E\Gamma$ and automorphism group $\aut(\Gamma)$. An \textit{$s$-arc} in $\Gamma$ is a tuple $(v_0,v_1,\ldots, v_s)$ such that $v_i$ is adjacent to $v_{i+1}$ and $v_{i-1}\neq v_{i+1}$ for all $i$. 
For $G\leq \aut(\Gamma)$ the graph $\Gamma$ is \textit{locally $(G,s)$-arc transitive} if $\Gamma$ contains an $s$-arc and if  for each $v \in V\Gamma$ the stabiliser of $v$ in $G$ acts transitively on the set of $s$-arcs emanating from $v$. 
If $G$ is  transitive on $V\Gamma$, then we say $\Gamma$ is \textit{$(G,s)$-arc transitive}.
We also say that $\Gamma$ is (locally) $s$-arc transitive if it is (locally) $(\aut(\Gamma),s)$-arc transitive.
It is natural to expect that  (locally) $s$-arc transitive graphs with large values of $s$ are interesting. Weiss~\cite{weiss_1981} showed (building on work of Tutte~\cite{tutte_1947, tutte_1959} in the cubic case) that $s \leqslant 7$ for all $s$-arc transitive graphs of valency at least three.  Recently, van Bon and Stellmacher~\cite{bon_stellmacher_2015} have shown that $s\leqslant 9$ for all locally $s$-arc transitive graphs of valency at least three. 
Based on these results, one might hope to classify such graphs,  for particular values of $s$, or with particular  properties. In the vertex-transitive case, this problem is well studied; in this article we focus on the vertex-intransitive case.
%The study of $(G,s)$-arc transitive graphs began with  Tutte, who  proved   that  $(G,s)$-arc transitive graphs of valency three  satisfy $s\leq 5$. Tutte's result provoked  the study of bounds on  $s$-arc transitivity for more general classes of graphs. Using the Classification of Finite Simple Groups, Weiss established the bound $s\leq 7$ for $(G,s)$-arc transitive graphs of valency at least three.  For locally $(G,s)$-arc transitive graphs of valency at least three,  van Bon and Stellmacher showed that the bound $s\leq 9$ holds.  and of course, it might be desirable to construct new examples -- or even to classify such graphs --  for particular values of $s$. In the vertex-transitive case, this problem is well studied; in this article we focus on the vertex-intransitive case.
%This work was extended by the same authors in \cite{bon_stellmacher_2019} where the isomorphism type of vertex stabilisers is found (under the assumptions that $s\geq 4$ and each vertex has valency at least three). 

 Giudici, Li and Praeger \cite{giudici_li_praeger_2003} initiated a program of global analysis aimed  towards characterising locally $(G,s)$-arc transitive graphs with $s\geq 2$  such that $G$ is vertex-intransitive. Such graphs are bipartite and the two parts of the bipartition are $G$-orbits. One of the main aspects of their program is a reduction \cite[Theorem 1.1]{giudici_li_praeger_2003} to the case where $G$ acts quasiprimitively on at least one of the two orbits, a so-called `basic' graph. Further analysis of the basic examples can be done by   utilising the O'Nan-Scott Theorem for quasiprimitive groups~\cite{praeger}, which  divides the quasiprimitive groups into eight  types  --  holomorph affine (HA), almost simple (AS), simple diagonal (SD), compound diagonal (CD), holomorph simple (HS), holomorph compound (HC), product action (PA) and twisted wreath (TW) (see Section 2 below for   details). When $G$ is quasiprimitive on both orbits \cite[Theorem 1.2]{giudici_li_praeger_2003}  shows that either the quasiprimitive type on each orbit is the same, or is a pairing of SD and PA types. All the  examples are known in the latter case (see \cite{giudici_li_praeger_2006_2}), and in the former case a classification of such graphs would imply a classification of  $ (G,s)$-arc transitive graphs  (see the discussion preceding \cite[Lemma 3.3]{giudici_li_praeger_2003}) and is therefore incomplete.
 
 When $G$ is quasiprimitive on exactly  one orbit,  \cite[Theorem 1.3]{giudici_li_praeger_2003} shows that the quasiprimitive type is HA, HS, AS, PA or TW.  A full classification was achieved in the HA, HS and AS cases. In the PA case, much is known if the action is primitive rather than quasiprimitive, but still an infinite family of such graphs is given. In the TW case however, only one example was provided.

The twisted wreath product construction was introduced by B.~H.~Neumann  \cite{neumann} and   refined by Suzuki \cite{suzuki}. Not every group that arises from the construction is primitive or even quasiprimitive -- the precise conditions are delicate; in fact  the twisted wreath groups were missed in the first version of the O'Nan-Scott Theorem for primitive groups \cite{scott}.  We refer the reader to Baddeley \cite{baddeley} for a discussion of the conditions for which a group of twisted wreath type is primitive. Furthermore,  Baddeley  \cite{baddeley2arc} has characterised those $2$-arc transitive graphs with automorphism group a quasiprimitive group of TW type. 

In this paper, we construct infinitely many locally $(G,2)$-arc transitive graphs where $G$ is vertex-intransitive and is quasiprimitive on exactly one orbit of TW type. Our main result thus shows that the last category of Giudici, Li and Praeger's global analysis program described above is also infinitely populated. In our main theorem below,  we also prove that the examples we construct are basic, with respect to the full automorphism group of the graph. This  establishes that the graphs we construct are \emph{new}, in the sense that they do not arise from one of the constructions in \cite{giudici_li_praeger_2003}, or the sequels \cite{giudici_li_praeger_2006, giudici_li_praeger_2006_hom_fact, giudici_li_praeger_2006_2} or  further studies of locally $s$-arc transitive graphs \cite{fang_li_praeger_2004, giudici_li_praeger_2005, giudici_li_praeger_2007, leemans_2009, swartz_2012}. %sequels...

%In this paper, we construct an infinite family of locally $(G, 2)$-arc transitive graphs with an automorphism group that acts quasiprimitively of twisted wreath type on only one orbit by proving the following theorem.

 % qp groups, tw groups interesting, baddeley
 
 \begin{theorem}
 \label{mainthm}
For every prime power $q \geqslant 4$ there exists a locally $2$-arc transitive graph $\Gamma$ of valency $q^2$ such that,  for  $A=\mathrm{Aut}(\Gamma)$, the following hold:
\begin{enumerate}
\item $A$ has two orbits $\Delta_1$ and $\Delta_2$  on $V\Gamma$;
\item the action of $A$ on $\Delta_1$ is quasiprimitive of twisted wreath type;
\item there is a normal subgroup $Y$ of $A$ that is intransitive on $\Delta_2$ and the quotient graph $\Gamma_Y$ is the complete bipartite graph $K_{1,q^2}$.
\end{enumerate}
 \end{theorem}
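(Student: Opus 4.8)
The plan is to build $\Gamma$ as a bipartite coset graph from a suitably chosen twisted wreath product. Fix a prime power $q \geqslant 4$ and let $T = \PSL(2,q)$ (or a closely related simple group), with a point stabiliser $P$ in the natural action on the projective line, so $|T:P| = q+1$. Take $k$ copies of $T$ indexed by a transitive permutation group $Q$ on $k$ points, and form the base group $M = T^k$ together with a twisting subgroup so that $G = T^k \rtimes Q$ is quasiprimitive of twisted wreath type on $\Delta_1 = [G:Q]$, the coset space of the top group. The valency $q^2$ will force a specific small value of $k$; the natural guess is $k=2$, so that a vertex stabiliser on $\Delta_1$ meets the base group in a subgroup of $T \times T$ whose two projections are point stabilisers, giving neighbourhood size $(q+1) \cdot$ (something) — I would reverse-engineer the exact local data from the requirement that one side has valency $q^2$ and the other side, after taking the quotient $\Gamma_Y$, collapses to a star $K_{1,q^2}$.

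**Verifying local $2$-arc transitivity.**

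Once $\Gamma$ is defined as the coset graph $\mathrm{Cos}(G, G_{\alpha}, G_{\beta})$ with $\alpha \in \Delta_1$, $\beta \in \Delta_2$, I would check connectedness (i.e. $\langle G_\alpha, G_\beta\rangle = G$) and then verify the local $2$-arc transitivity condition: $G_\alpha$ acts transitively on $\Gamma(\alpha)$ and $G_{\alpha\beta}$ acts transitively on $\Gamma(\beta) \setminus \{\alpha\}$, and symmetrically with the roles swapped. By the standard coset-graph dictionary this reduces to showing the double-coset conditions $G_\alpha = G_{\alpha\beta} G_{\alpha}^{[1]}\cdot(\ldots)$ — concretely, that $G_{\alpha\beta}$ is maximal in neither stabiliser in the wrong way, and that the relevant quotients $G_\alpha/G_\alpha^{[1]}$ and $G_\beta/G_\beta^{[1]}$ act $2$-transitively on their neighbourhoods. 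Here the $2$-transitivity of $\PSL(2,q)$ on $q+1$ points (and of $\AGL(1,q)$ or $\ASL$-type groups on $q$ or $q^2$ points) is exactly what is needed; this is where the choice $q \geqslant 4$ enters, ensuring the relevant $2$-transitive actions are genuinely $2$-transitive and the stabilisers behave.

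**Identifying the automorphism group and the quotient.**

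For item (1) I must show $A = \aut(\Gamma)$ is still vertex-intransitive, i.e. that no graph automorphism swaps $\Delta_1$ and $\Delta_2$; this follows because the two parts have different sizes (valency $q^2$ on one side versus a different valency, or different part-size, on the other) — a parity/counting argument on $|\Delta_1|$ versus $|\Delta_2|$. For item (2) I need that the $A$-action on $\Delta_1$ is still quasiprimitive of TW type: since $G \leqslant A$ is already quasiprimitive of TW type on $\Delta_1$, and TW type is characterised by the socle being a non-abelian non-simple minimal normal subgroup that is \emph{non-regular but} acts with a point stabiliser projecting onto a full simple factor (equivalently the socle is not a direct product of regular and the top group is not a direct complement), I would argue that $A$ cannot be larger in a way that changes the type — appealing to the classification of overgroups of quasiprimitive groups, or more directly showing $\soc(A|_{\Delta_1}) = \soc(G|_{\Delta_1}) = T^k$ by a normalizer argument inside $\sym(\Delta_1)$. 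For item (3), $Y$ is taken to be the base group $M = T^k$ (or the kernel of the $A$-action on $\Delta_2$); $Y \trianglelefteq A$, $Y$ is intransitive on $\Delta_2$ because $\Delta_2$ is identified with cosets of a subgroup containing a full $M$-orbit structure, and $\Gamma_Y$ collapses each $Y$-orbit on $\Delta_1$ to a point while fixing the (single relevant) $Y$-orbit structure on $\Delta_2$, yielding $K_{1,q^2}$ — one checks the block sizes directly.

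**Main obstacle.**

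The hard part will be item (2): proving that passing from $G$ to the full automorphism group $A$ does not inflate or alter the quasiprimitive type on $\Delta_1$. Showing $A$ acts quasiprimitively at all is routine once $G$ does, but controlling $\soc(A|_{\Delta_1})$ precisely — ruling out that $A$ is of PA or HC type with $T^k$ merely a sub-socle, or that $A$ is almost simple with a much larger socle — requires a careful analysis of the local structure $A_\alpha$, using local $2$-arc transitivity to pin down $A_\alpha^{\Gamma(\alpha)}$ and amalgam-type arguments (à la Weiss, van Bon–Stellmacher) to bound $A_\alpha$, then matching this against the list of quasiprimitive types. I would structure this as a separate lemma, and it is where the bulk of the technical work lies.
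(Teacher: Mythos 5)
Your proposal does not reach the construction that makes the theorem work, and several of the concrete guesses in it are incompatible with the conclusion you are trying to prove. Most importantly, the local data you sketch contradicts twisted wreath type: for $G$ to be quasiprimitive of TW type on $\Delta_1$ the socle $T^k$ must act \emph{regularly} there, so the stabiliser of a vertex of $\Delta_1$ meets the base group trivially; your guess that it meets the base group in a subgroup of $T\times T$ with point-stabiliser projections (and $k=2$, valency built from the $\PSL(2,q)$-action on $q+1$ points) would put you in PA or diagonal type instead. In the paper the vertex stabiliser on $\Delta_1$ is the whole top group $P=V\rtimes Q$, an affine $2$-transitive group with $|V|=q^2$, the number of simple factors is $k=|V|=q^2$, and the valency $q^2$ is $|P:Q|$. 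The genuinely hard constructive step, which your plan leaves as ``reverse-engineer the local data'', is to produce the second vertex stabiliser: a subgroup $R\leqslant N_k$ of order $|V|$ normalised by $Q$ with $RQ\cong P$, obtained from the hypothesis that $p$ divides $|Z(Q_x)|$ together with a permutation-module and Frobenius-reciprocity argument. Without this (or some substitute) there is no graph, so local $2$-arc transitivity cannot be ``checked'' as you propose; in the paper it then falls out because both $P$ and $RQ$ act on their $q^2$ cosets of $Q$ as the $2$-transitive group $P$.

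Two further steps would fail as written. For item (1) you argue that no automorphism swaps the parts because they have different sizes or valencies; in fact $|\Delta_1|=|\Delta_2|$ and the graph is regular of valency $q^2$ on both sides, so this counting argument collapses. The paper instead proves that $A$ is quasiprimitive on $\Delta_1$ but \emph{not} on $\Delta_2$, which makes the two actions inequivalent and forces $A=\aut(\Gamma)$. For item (3) your description has the roles reversed: to get $\Gamma_Y\cong K_{1,q^2}$ the subgroup $Y$ must be transitive on $\Delta_1$ (one block) and have $q^2$ orbits on $\Delta_2$; this is what the paper extracts from non-quasiprimitivity on $\Delta_2$ together with the uniqueness of the $G$-invariant partition $\Pi$ of $\Delta_1$. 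Finally, for the part you correctly identify as the crux (item (2)), you offer only a sketch via amalgam-type bounds on $A_\alpha$ in the style of Weiss and van Bon--Stellmacher; the paper's route is entirely different and is the bulk of its Section~4: it uses the inclusion theorems for (quasi)primitive groups (Praeger's overgroups of SD groups, Baddeley--Praeger and Praeger's quasiprimitive inclusions), the fact that $G$ preserves a unique non-trivial partition of $\Delta_1$, edge-counting to rule out normal subgroups intransitive on $\Delta_1$, and prior Giudici--Li--Praeger classification results to eliminate the alternating, PA and mixed-type possibilities. As it stands the proposal is a plan with the key construction and the key automorphism-group analysis missing, and with the TW-local-structure and part-swapping arguments incorrect.
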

 
  A complete bipartite graph $K_{1,q^2}$ is usually called a \textit{star}. In part (iii), the vertex set of the  \textit{quotient graph} $\Gamma_Y$ is the set of $Y$-orbits in $V\Gamma$ and there is an edge in $\Gamma_Y$  between two vertices if there is an edge between the corresponding $Y$-orbits  in $\Gamma$.
It is these properties that give rise to the name `star normal quotient' to describe  graphs such as those appearing in Theorem~\ref{mainthm}.

%Only one example of a locally $2$-arc transitive graph admitting a quasiprimitive action of twisted wreath type on only one orbit was given in the global analysis~\cite[Example 4.7]{giudici_li_praeger_2003}. The subsequent work of Giudici, Li and Praeger~\cite{giudici_li_praeger_2006} studied the case where the automorphism group acts quasiprimitively on only one bipartite half, but it did not investigate the twisted wreath case. Other work has been done to classify locally arc transitive graphs with automorphism groups of various types, see for example~\cite{...}, but the twisted wreath case on one orbit has remained open. 
%
%So, in this paper, we construct and study the first infinite family graphs with the property that the automorphism group is quasiprimitive on only one orbit of twisted wreath type. In Section~\ref{section:construction} we describe the construction of the infinite family and its properties in detail, and in Section~\ref{section:autgrp} we study the automorphism group and show that the quasiprimitive action is indeed of twisted wreath type on only one orbit.
% 
% **the last two paragraphs should go before the theorem...**
% **need to introduce more notation**

In Section 2 we assemble some results on overgroups of primitive and quasiprimitive groups. The construction of the graphs appearing in the theorem takes place in Section 3, and in Section 4 we analyse the automorphism groups of the constructed graphs.

%
% All graphs considered in this paper are connected, finite and simple.
% 
% 
% \begin{theorem}
%For each prime $p\geqslant 5$ there exists a graph $\Gamma$ of valency $p^2$ such that $A:=\mathrm{Aut}(\Gamma)$ has the following properties:
%\begin{enumerate}
%\item $A$ has two orbits $\Delta_1$ and $\Delta_2$  on $V(\Gamma)$;
%\item the action of $A$ on $\Delta_1$ is quasiprimitive of twisted wreath type;
%\item the action of $A$ on $\Delta_2$ is intransitive, and there is a normal subgroup $Y$ of $A$ such that the quotient graph $\Gamma_Y$ is the star $K_{1,p^2}$.
%\end{enumerate}
% \end{theorem}
% 
% 
 \section{Preliminaries}
 
The construction of the graphs appearing in Theorem~\ref{mainthm} is based on the   \emph{coset graph} construction.

\begin{definition}
Let $G$ be a group and let $L,R < G$ be such that $L\cap R$ is core-free in $G$. Let $\Delta_1=\{Lx:x\in G\}$ and let $\Delta_2=\{Ry:y\in G\}$. Define the bipartite graph $\Gamma=\mathrm{Cos}(G,L,R)$ such that $V\Gamma=\Delta_1\cup \Delta_2$ and $Lx\sim Ry$ if and only if $Lx \cap Ry \neq \emptyset$. We refer to $(L,R,L\cap R)$ as the associated \textit{amalgam}.
\end{definition}

% \begin{remark}
% The condition $Lx\sim Ry$ if and only if $ xy^{-1}\in LR$ or $yx^{-1} \in RL$ in the coset graph $\mathrm{Cos}(G,L,R)$ is equivalent to the condition $Lx\sim Ry \iff Lx \cap Ry \neq \emptyset$.
% \end{remark}

The following lemma shows that every locally $s$-arc transitive graph arises from the above construction.

\begin{lemma}[{\cite[Lemma 3.7]{giudici_li_praeger_2003}}]
    For a group $G$ and subgroups $L,R < G$ such that $L\cap R$ is core-free in $G$, the graph $\Gamma=\mathrm{Cos}(G,L,R)$ satisfies the following properties:
    \begin{enumerate}
        \item $\Gamma$ is connected if and only if $\langle L,R \rangle  = G$;
        \item $G\leq \mathrm{Aut}(\Gamma)$ and $\Gamma$ is $G$-edge transitive and $G$-vertex intransitive;
        \item $G$ acts faithfully on both $\Delta_1$ and $\Delta_2$ if and only if both $L$ and $R$ are core-free.
    \end{enumerate}
    Conversely, if $\Gamma$ is $G$-edge transitive and not $G$-vertex transitive, and $v$ and $w$ are adjacent vertices, then $\Gamma \cong \mathrm{Cos}(G,G_v,G_w)$.
\end{lemma}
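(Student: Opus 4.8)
The plan is to run everything through the right-multiplication action of $G$ on $\Delta_1 = L\backslash G$ and $\Delta_2 = R\backslash G$, resting on one elementary observation: two cosets $Lx$ and $Ry$ meet exactly when there is a $g \in G$ with $Lx = Lg$ and $Ry = Rg$, so the neighbours of a vertex $Lx \in \Delta_1$ are precisely the cosets $R(\ell x)$ with $\ell \in L$, and symmetrically the neighbours of $Ry \in \Delta_2$ are the cosets $L(r y)$ with $r \in R$. With this in hand part (ii) is quick: right multiplication preserves $\Delta_1$ and $\Delta_2$ and sends $Lx \cap Ry$ to $Lxg \cap Ryg$, so $G$ acts as a group of automorphisms; its kernel on $V\Gamma$ is the intersection of its kernels on $\Delta_1$ and on $\Delta_2$, that is, $\mathrm{core}_G(L) \cap \mathrm{core}_G(R)$, a normal subgroup of $G$ contained in $L \cap R$ and hence in $\mathrm{core}_G(L \cap R) = 1$, so $G \leqslant \aut(\Gamma)$. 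For edge-transitivity I would, given two edges, pick for each of them an element of $G$ in the intersection of the two cosets it joins, and check that the ratio of these two elements maps one edge onto the other; vertex-intransitivity is immediate, since $\Delta_1$ and $\Delta_2$ are proper, nonempty, disjoint, $G$-invariant subsets of $V\Gamma$.

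For part (i), put $H = \langle L, R\rangle$ and let $C$ be the connected component of the vertex $L1$. First, $1 \in L1 \cap R1$ gives $L1 \sim R1$, so $R1 \in C$; since automorphisms preserve components and $G_{L1} = L$, $G_{R1} = R$, the subgroup $H$ preserves $C$ setwise, so $Lh = (L1)h$ and $Rh = (R1)h$ lie in $C$ for every $h \in H$. Conversely, by the neighbour description, if $Lx$ lies in $\{Lh : h \in H\} \cup \{Rh : h \in H\}$, equivalently if $x \in H$ (as $L \leqslant H$), then so does each of its neighbours $R(\ell x)$, and likewise on the other side; hence this set spans a union of connected components, and being a component containing $L1$ it must equal $C$. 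Thus $\Gamma$ is connected precisely when $C = V\Gamma$, and since $L \leqslant H$ one has $\{Lh : h \in H\} = \Delta_1$ if and only if $H = G$, which establishes (i).

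For part (iii), the kernel of $G$ on $\Delta_1$ is $\mathrm{core}_G(L)$ and its kernel on $\Delta_2$ is $\mathrm{core}_G(R)$, so faithfulness on both orbits is equivalent to $L$ and $R$ both being core-free. For the converse statement, let $\Gamma$ be $G$-edge transitive and not $G$-vertex transitive, with $v \sim w$ and $G \leqslant \aut(\Gamma)$. Every edge lies in the $G$-orbit of $\{v, w\}$, and since $G$ is not vertex-transitive the orbits $v^G$ and $w^G$ are distinct and (assuming no isolated vertices) constitute the two parts of the bipartition; the orbit--stabiliser maps $G_v x \mapsto v^x$ and $G_w y \mapsto w^y$ then give $G$-equivariant bijections $\Delta_1 \to v^G$ and $\Delta_2 \to w^G$ for $\mathrm{Cos}(G, G_v, G_w)$. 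These combine to a graph isomorphism: $G_v x \sim G_w y$ in the coset graph means there is a $g$ with $v^g = v^x$ and $w^g = w^y$, and if such a $g$ exists then applying it to $v \sim w$ gives $v^x \sim w^y$, while if $v^x \sim w^y$ then $G$-edge transitivity supplies $g$ with $\{v, w\}^g = \{v^x, w^y\}$, the identification being forced since $v^g$ and $w^g$ lie in different orbits. Finally $G_v \cap G_w$ is core-free, because any normal subgroup of $G$ contained in it fixes $v^G$ and $w^G$ pointwise and so is trivial.

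The main obstacle, such as it is, lies in part (i): it is tempting to prove connectivity by following coset representatives along paths, which quickly becomes unwieldy; the clean route I would commit to is to locate the component of the base vertex not by walking paths but by sandwiching it between the inclusion that $\langle L, R\rangle$ preserves $C$ (yielding one containment) and the observation that the candidate vertex set has no outgoing edges (yielding the other). Everything else is bookkeeping with coset actions and their permutation kernels, and the hypothesis that $L \cap R$ is core-free enters precisely to force those kernels to vanish.
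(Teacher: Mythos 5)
Your proof is correct, and the paper itself offers no argument for this statement: it is quoted verbatim from Giudici, Li and Praeger (Lemma 3.7 of their 2003 paper), so there is nothing internal to compare against. Your treatment -- the observation that $Lx\sim Ry$ exactly when some $g$ satisfies $Lg=Lx$ and $Rg=Ry$, right multiplication giving edge-transitive automorphisms with kernel $\mathrm{core}_G(L)\cap\mathrm{core}_G(R)\leqslant\mathrm{core}_G(L\cap R)=1$, the component of the base vertex identified with $\{Lh,Rh:h\in\langle L,R\rangle\}$, and the orbit--stabiliser identification for the converse -- is precisely the standard argument behind the cited result, with the no-isolated-vertices caveat in the converse correctly flagged (it is implicit in the source's setting).
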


A transitive permutation group $G$ on a finite set $\Omega$ is \textit{quasiprimitive} if each non-trivial normal subgroup of $G$ acts transitively on $\Omega$.  Praeger~\cite{praeger} classified quasiprimitive groups in   an O'Nan-Scott type theorem and separated them into eight types \cite{praeger_1997} (see also~\cite[Section 2]{giudici_li_praeger_2003}).
%Since $\aut(\Gamma)$ need not be transitive on $V\Gamma$, we say that $G\leq \aut(\Gamma)$ is of \textit{type} $\{X,Y\}$ if $G$ acts quasiprimitively of type $X$ on one $G$-orbit and of type $Y$ on the other $G$-orbit.
%The following theorem shows that quasiprimitive groups arise naturally when taking quotients of locally transitive graphs.
The classification is based around the abstract structure and the action of the \emph{socle} (the product of the minimal normal subgroups). If $G$ is a finite quasiprimitive permutation group with socle $X$, then either $X$ itself is a minimal normal subgroup, or $X=X_1X_2$ for some minimal normal subgroups $X_1$ and $X_2$ of $G$. In the latter case,  $X_1 \cong X_2 \cong T^k$ for some finite non-abelian simple group $T$ and integer $k$; if $k=1$ then $G$ is holomorph simple (HS) type and if $k>1$ then $G$ is  holomorph compound (HC) type. If $X$ itself is a minimal normal subgroup of $G$, then either $X $ is abelian and regular and $G$ is holomorph affine (HA) type, or, $X\cong T^k$ for some non-abelian simple group $T$. If $k=1$ then $G$ is almost simple (AS) type. For  $k>1$, the type depends upon the structure of a point stabiliser, $H$ say, in $X$. If $H=1$ then $X$ is regular and $G$ is of twisted wreath (TW) type.  If $H$ is non-trivial and  projects to a proper subgroup in each of the simple direct factors of $X$ then $G$ has product action (PA) type. If $H$ is non-trivial and projects onto each of the simple direct factors, then $G$ is simple diagonal (SD) type  if $H\cong T$ and compound diagonal (CD) type if $H \cong T^\ell$ for some integer $\ell>1$.

We will require the following results about the inclusion problem of primitive and quasiprimitive groups. Recall that a   permutation group $G$ acting on a set $\Omega$ is primitive if the only partitions of  $\Omega$ that are preserved by $G$ are the partition of $\Omega$ into singletons and the partition $\{ \{ \Omega \}\}$. For any $\omega \in \Omega$, there is a one-to-one correspondence between the overgroups of  the point stabiliser $G_\omega$ of $\omega$ in $G$ and the partitions of $\Omega$ that are preserved by $G$. Thus a transitive permutation group is primitive if and only if point stabilisers are maximal subgroups.

\begin{lemma}[{\cite[Proposition 8.1]{praeger_1990}}]
\label{lem: overgroup of sd}
Suppose that $G \leqslant H \leqslant \sym(n)$ and that $G$ is  primitive. If $G$ has type SD, then one of the following hold: $H=\alt(n)$; $H=\sym(n)$;  $\soc(G)=\soc(H)$ and $H$ is of SD type.
\end{lemma}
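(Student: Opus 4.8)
The plan is to transfer the structure of $G$ to $H$ through their socles and the O'Nan--Scott dichotomy. Write $M=\soc(G)$, so that $M\cong T^k$ for a non-abelian simple group $T$ and some $k\geqslant 2$; recall that in SD type $\cent{G}{M}=1$, so $M$ is the unique minimal normal subgroup of $G$, $G$ acts transitively (indeed primitively) on the $k$ simple direct factors $T_1,\dots,T_k$ of $M$, and the point stabiliser $M_\omega=(\soc G)_\omega$ is a full diagonal subgroup of $M$ of order $|T|$. First I would record that $H$ is primitive: $H$ is transitive, as it contains the transitive group $G$, and any $H$-invariant partition of the $n$ points is $G$-invariant, hence trivial. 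Since $n=|T|^{k-1}$ and $|T|$ has at least two distinct prime divisors, $n$ is not a prime power, so $H$ is not of HA type; hence $N:=\soc(H)$ is non-abelian, $\cent{H}{N}=1$, and $H$ embeds in $\aut(N)$.

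The second step is to show $M\leqslant N$. As $N\trianglelefteq H$, the subgroup $N\cap G$ is normal in $G$, hence so is $M\cap N=M\cap(N\cap G)$; since $M$ is minimal normal in $G$, this intersection is $1$ or $M$, and in the second case $M\leqslant N$. If $M\cap N=1$, then $M\cong MN/N\leqslant H/N$, which embeds in $\Out(N)$; writing $N\cong T_0^{r}$ for a non-abelian simple group $T_0$, the group $\Out(N)$ is an extension of the solvable group $\Out(T_0)^{r}$ (solvable by the Schreier conjecture) by a subgroup of $\sym(r)$, so the non-solvable group $M=T^k$ must embed in that subgroup of $\sym(r)$, forcing $r\geqslant k\mu(T)$, where $\mu(T)\geqslant5$ is the minimal faithful permutation degree of $T$. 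Playing this against transitivity of $N$ on $|T|^{k-1}$ points and the inequality $|H/N|\geqslant|G|$ yields, by a comparison of orders, either a contradiction or $N=\alt(n)$ and hence $H\in\{\alt(n),\sym(n)\}$, one of the permitted outcomes. So I may assume $M\leqslant N$, and then $M_\omega=M\cap H_\omega=M\cap N\cap H_\omega=M\cap N_\omega$; I use this identity below.

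Next I determine the type of $H$ from the embedding $M=T^k\leqslant N$. Because the $T_i$ pairwise commute and are non-abelian simple, each $T_i$ is a full diagonal subgroup of a sub-block $\prod_{j\in J_i}S_j$ of the simple direct factors $S_1,\dots,S_m$ of $N$, the $J_i$ are pairwise disjoint, each such $S_j\cong T$, and transitivity of $M$ forces $\{J_1,\dots,J_k\}$ to be a partition of $\{1,\dots,m\}$. I then rule out the non-SD, non-AS types. For HS and HC one has $N=N_1\times N_2$ with $N_1,N_2$ regular, and $M\leqslant N$ forces either $M$ regular (contradicting $M_\omega\neq1$) or the impossible inequality $|T|^k\leqslant|T|^{k-1}$; in the HS case moreover $M=N$ but $G$ swaps the two simple factors of $M$ while $H$ does not. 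For TW, $N_\omega=1$, so $M_\omega=M\cap N_\omega=1$, contradicting $|M_\omega|=|T|$. For PA, $N_\omega=R^m$ with $1\neq R<S$ aligned with the $S_j$, so $M_\omega=M\cap R^m\cong R^k\not\cong T$ as $k\geqslant2$. For CD, $H$ preserves a non-trivial product decomposition of $\Omega$ with $\ell\geqslant2$ factors, hence so does $G$; but a primitive group of SD type admits no such decomposition (here $k\geqslant3$, which the CD degree constraint $m-\ell=k-1$ together with $\ell\mid m$ forces), a contradiction. Thus $H$ is of SD or AS type.

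Finally, if $H$ is of SD type then $N_\omega$ is a full diagonal of $N=S^m$ of order $|S|=|T|$, so $|T|^{k-1}=|N:N_\omega|=|T|^{m-1}$, giving $m=k$; then every part of $\{J_1,\dots,J_k\}$ is a singleton, $M=N$, and $\soc(G)=\soc(H)$ with $H$ of SD type, as required. If $H$ is of AS type, then $N=S$ is simple and $T^k\leqslant S$ is transitive, giving a factorisation $S=T^k\,S_\omega$ with one factor a direct product of $k\geqslant2$ non-abelian simple groups; by the classification of maximal factorisations of finite almost simple groups this cannot happen unless $S=\alt(n)$, whence $H\in\{\alt(n),\sym(n)\}$. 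The two steps I expect to cause the most trouble are the case $M\cap N=1$ in the socle containment --- which in a self-contained proof needs the Schreier conjecture together with a careful counting argument, and is absorbed into the general inclusion-problem machinery of \cite{praeger_1990} --- and the almost simple case, which genuinely relies on the Liebeck--Praeger--Saxl factorisation theorem to exclude a hidden transitive copy of $T^k$ inside a simple group other than $\alt(n)$.
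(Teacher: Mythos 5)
The paper does not actually prove this lemma: it is imported verbatim as \cite[Proposition 8.1]{praeger_1990}, so what you are attempting is a re-proof of a cited theorem. Your overall skeleton (force $H$ primitive, get $\soc(G)\leqslant\soc(H)$, then run through the O'Nan--Scott type of $H$, handling the almost simple case by factorisations) is indeed the shape of Praeger's argument, but at the load-bearing points the proposal has genuine gaps rather than proofs.

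The most serious unacknowledged gap is the assertion that, once $M=\soc(G)=T^k\leqslant N=\soc(H)$, each simple factor $T_i$ of $M$ is a full diagonal subgroup of a sub-block of the simple factors of $N$ with all those factors isomorphic to $T$. Pairwise commuting and simplicity do not give this: Scott's Lemma (the tool the paper itself uses for such conclusions elsewhere) applies to subdirect or normal subgroups of a direct product, and nothing forces the projections of $M$ to the simple factors of $N$ to be surjective --- $T^k$ can perfectly well sit inside $S^m$ with every projection proper, which is exactly the configuration you must exclude when $H$ has PA type over a larger simple group $S$, and in the AS case ($m=1$) the claim is false outright unless $T\cong S$. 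Since your PA exclusion ($N_\omega=R^m$, $M_\omega\cong R^k$), your CD bookkeeping, and the final identification $m=k$, $M=N$ in the SD case all rest on this alignment, the heart of the proof is missing; this is precisely where Praeger has to invoke the Liebeck--Praeger--Saxl classification of maximal factorisations. Two further steps are deferred rather than done: in the $M\cap N=1$ branch the advertised outcome ``either a contradiction or $N=\alt(n)$'' cannot arise by the route you set up (if $N=\alt(n)$ then $\Out(N)$ has order at most $4$, so $T^k$ does not embed in it), and a crude order/degree comparison does not close the case (for $k=2$ one has $n=|T|$, and a large $|T|$ is compatible with lower bounds of the shape $n\geqslant 5^{r}$), so genuinely careful counting is needed; and the AS case is delegated wholesale to the factorisation theorem without extracting the contradiction. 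The CD exclusion likewise rests on the unproved assertion that a primitive SD group preserves no nontrivial product decomposition. Given all this, citing \cite[Proposition 8.1]{praeger_1990}, as the paper does, is the honest option; a self-contained proof would require substantially more than this sketch.
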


\begin{lemma}
Suppose that $G \leqslant H \leqslant \sym(n)$ and that $G$ is  quasiprimitive. If $G $ has type TW and if $H$ is primitive, then one of the following hold:
\label{lem: prim overgroup of tw}
\begin{enumerate}
\item $H=\alt(n)$ or $\sym(n)$;
\item $H$   is of type PA, HC, SD or CD;
\item $H$ is of type TW and $\soc(G)=\soc(H)$.
\end{enumerate}
\end{lemma}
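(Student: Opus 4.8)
The plan is to go through the O'Nan--Scott types of the primitive group $H$ one at a time and eliminate every type not listed in the conclusion, in the spirit of Praeger's treatment of the inclusion problem in \cite{praeger_1990}, while being careful that here $G$ is only assumed quasiprimitive. Put $X=\soc(G)$. Since $G$ has type TW we have $X\cong T^k$ for a non-abelian simple group $T$ and an integer $k\geqslant 2$, the group $X$ is regular on the underlying set $\Omega$ (so $n=|\Omega|=|T|^k$), $C_G(X)=1$, and $X$ is the unique minimal normal subgroup of $G$; in particular every non-trivial normal subgroup of $G$ contains $X$. As $G$ is transitive, so is $H$, and $H_\omega$ is maximal in $H$; if $H\in\{\alt(n),\sym(n)\}$ we are in case (i), so assume otherwise and write $M=\soc(H)$, which is transitive.

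First I would dispose of type HA: by Burnside's $p^aq^b$-theorem $|T|$ is divisible by at least three distinct primes, so $n=|T|^k$ is not a prime power, whereas a primitive group of type HA has prime-power degree. Hence $H$ is not of type HA, and so $M$ is non-abelian, a direct product $M\cong R^m$ of copies of a non-abelian simple group $R$. Now consider $M\cap G$, which is normal in $G$; by quasiprimitivity either $M\cap G=1$, or $M\cap G$ is transitive and hence contains the unique minimal normal subgroup $X$ of $G$, so that $X\leqslant M$. I would treat these two cases separately.

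If $M\cap G=1$, then the projection $H\to H/M$ restricts to an embedding of $G$; when $H$ has type AS or HS the quotient $H/\soc(H)$ is soluble (it embeds in a product of outer automorphism groups of finite simple groups, soluble by the Schreier property), whence $G$ would be soluble, contradicting $T^k\leqslant G$, so in this case $H$ is of type PA, HC, SD, CD or TW. If instead $X\leqslant M$, then $M$ contains the regular subgroup $X\cong T^k$ with $k\geqslant 2$: if $M=R$ were simple this would exhibit a factorisation $R=R_\omega X$ of the almost simple primitive group $H$ with a factor isomorphic to $T^k$, which is excluded by the classification of factorisations of the finite almost simple groups, and the case $M\cong R\times R$ of type HS is ruled out similarly; so again $H$ is of type PA, HC, SD, CD or TW. Finally, if $H$ has type TW then $M=\soc(H)$ is itself regular, so $|M|=n=|X|$: in the case $X\leqslant M$ this forces $X=M$, that is $\soc(G)=\soc(H)$, and in the case $M\cap G=1$ one uses that $C_H(M)=1$ for $H$ of type TW (so that $X$, which normalises the normal subgroup $M$ of $H$ and meets it trivially, acts faithfully on $M$ by conjugation) to derive a contradiction, ruling this sub-case out. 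Assembling the cases yields the three alternatives.

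The step I expect to be the main obstacle is the case $X\leqslant\soc(H)$: showing that a regular subgroup of the form $T^k$ with $k\geqslant 2$ cannot occur in an almost simple primitive group, nor in one of holomorph simple type, is not elementary and relies on the classification of factorisations of the finite almost simple groups (equivalently, on the determination of the primitive permutation groups that admit a regular subgroup). The complementary arguments -- the degree count eliminating HA, the solubility argument in the case $M\cap G=1$, and the order comparison for type TW -- are comparatively routine.
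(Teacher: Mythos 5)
The paper does not prove this lemma from first principles at all: its proof is a one-line citation of Baddeley and Praeger's theorem on primitive overgroups of quasiprimitive groups (\cite[Theorem 1.2]{baddeley_praeger_2003}). Your proposal is therefore an attempt to reprove that result, and its skeleton (split on whether $\soc(H)\cap G$ is trivial or contains $X=\soc(G)$, eliminate HA by degree, eliminate AS and HS via solubility of $H/\soc(H)$ or via regular subgroups/factorisations) is sensible and close in spirit to how such inclusion theorems are actually proved. The steps you flag as relying on the classification of factorisations are indeed where the real content lies; note, though, that the precise statements you need (no regular subgroup isomorphic to $T^k$, $k\geqslant 2$, in a primitive group of type AS or HS not containing $\alt(n)$) are themselves theorems that have to be quoted from the literature on regular subgroups of primitive groups, not consequences one can simply read off the maximal factorisation tables; and in the HS case the factorisation you actually obtain is of $R\times R$ with a diagonal factor, so ``ruled out similarly'' is hiding a reduction step.

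There is, however, a genuine gap in the sub-case $H$ of type TW with $\soc(H)\cap G=1$. You claim a contradiction follows because $C_H(M)=1$ (where $M=\soc(H)$), so that $X$ acts faithfully on $M$ by conjugation. Faithfulness is not a contradiction: it only says $X$ embeds in $\aut(M)\cong\aut(S)\wr\sym(m)$, and a subgroup isomorphic to $T^k$ of order $|S|^m$ can certainly sit inside that wreath product (for instance $\mathrm{Inn}(S)^m$ when $T=S$, $k=m$), so no numerical or structural clash arises at this level. What must be excluded is a regular subgroup $X\cong T^k$ of the holomorph of $S^m$ meeting $S^m$ trivially, together with the transitive group $G$ normalising it, and that exclusion is precisely part of the hard analysis in \cite{baddeley_praeger_2003}; it does not follow from the two facts you invoke, and without it your case analysis does not yield conclusion (iii). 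Since the lemma is anyway stated in the paper as a direct consequence of \cite[Theorem 1.2]{baddeley_praeger_2003}, the clean fix is to cite that theorem (as the paper does) rather than to close this sub-case by hand.
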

\begin{proof}
This follows from \cite[Theorem 1.2]{baddeley_praeger_2003}
\end{proof}

If $G$ is a permutation group acting on a set $\Omega$ and $G$ preserves a partition $\mathcal B$ of $\Omega$, then $G^{\mathcal B}$ denotes the permutation group induced on $\mathcal B$ by $G$. 

\begin{lemma}
\label{lem: imprim overgroup of tw}
Suppose that $G \leqslant H \leqslant \sym(n)$ and that $G$ is quasiprimitive. If $G$ has type TW and $H$ is imprimitive, then for  partition  $\mathcal B$  that is preserved by $H$, one of the following hold:
\begin{enumerate}
\item  $H $ has type TW and  $\soc(G)=\soc(H)$,
\item $\soc(G)=T^k < \soc(H) = S^k$  for non-abelian simple groups $S$, $T$, and both $G^{\mathcal B}$  and  $H^{\mathcal B}$ are of type PA.
\end{enumerate}
\end{lemma}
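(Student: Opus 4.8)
The plan is to reduce to the primitive case by passing to the action of $H$ on a block system, apply the primitive inclusion results to the induced groups, and then transport the outcome back to the action on the $n$ points, which I denote $\Omega$. Write $N=\soc(G)$; since $G$ is of type TW we have $N\cong T^k$ for a non-abelian simple group $T$ and some $k\geqslant 2$, $N$ is regular on $\Omega$, $C_G(N)=1$, and a point stabiliser $G_\omega$ permutes the $k$ simple direct factors of $N$ transitively (otherwise the product of the factors in an orbit would be a proper nontrivial normal subgroup of $G$). Let $\mathcal B$ be a nontrivial $H$-invariant partition; I may assume $\mathcal B$ is chosen so that $H^{\mathcal B}$ is primitive (pass to a maximal $H$-invariant partition), and then $\mathcal B$ is also $G$-invariant. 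Let $K$ be the kernel of the action of $H$ on $\mathcal B$.

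The first step identifies $G^{\mathcal B}$. As $N$ is regular on $\Omega$, the partition $\mathcal B$ consists of the cosets of the subgroup $C\leqslant N$ stabilising a block, and $G_\omega$ normalises $C$ and hence its $N$-core; the latter is a product of some of the simple direct factors of $N$, so by transitivity of $G_\omega$ on those factors it is $1$ or $N$, and it is not $N$ because $\mathcal B$ is nontrivial. Thus $N\cap K=1$, so $N$ acts faithfully on $\mathcal B$; then $K\cap G$ is a normal subgroup of the quasiprimitive group $G$ meeting $N$ trivially and fixing every block, hence intransitive, hence trivial. Consequently $g\mapsto g^{\mathcal B}$ is an isomorphism $G\to G^{\mathcal B}$ taking $N$ to $N^{\mathcal B}$, so $\soc(G^{\mathcal B})=N^{\mathcal B}\cong T^k$, $C_{G^{\mathcal B}}(N^{\mathcal B})=1$, and $G^{\mathcal B}$ is quasiprimitive. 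Since $\mathcal B$ is nontrivial the block stabiliser $C$ inside $\soc(G^{\mathcal B})$ is nontrivial; together with $k\geqslant 2$ and the fact that a quasiprimitive group permutes the simple factors of its socle transitively (so $C$ either projects onto every factor or to a proper subgroup of every factor), this forces $G^{\mathcal B}$ to have type SD or CD (when $C$ projects onto all factors, with $C\cong T$ or $C\cong T^\ell$ for some $\ell>1$) or type PA (otherwise).

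The second step applies the primitive inclusion theory to $G^{\mathcal B}\leqslant H^{\mathcal B}\leqslant\sym(\mathcal B)$, with $H^{\mathcal B}$ primitive. If $G^{\mathcal B}$ has type SD or CD, Lemma~\ref{lem: overgroup of sd} and its analogue for CD type — in the forms valid for quasiprimitive $G^{\mathcal B}$, which follow from \cite{baddeley_praeger_2003} — give that $H^{\mathcal B}\in\{\alt(\mathcal B),\sym(\mathcal B)\}$ or $\soc(H^{\mathcal B})=\soc(G^{\mathcal B})$. If $G^{\mathcal B}$ has type PA, the classification of primitive overgroups of quasiprimitive groups of type PA in \cite{baddeley_praeger_2003} gives that $H^{\mathcal B}\in\{\alt(\mathcal B),\sym(\mathcal B)\}$, or $\soc(H^{\mathcal B})=\soc(G^{\mathcal B})$, or $H^{\mathcal B}$ is of type PA with $\soc(H^{\mathcal B})=S^k$ for a non-abelian simple group $S\geqslant T$.

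The final step transports this back to $H$ acting on $\Omega$, and this is where the main work lies. Using $K\cap G=1$, the regularity of $N\leqslant H$ on $\Omega$, and, in the cases where $\soc(H^{\mathcal B})=N^{\mathcal B}$, the fact that the preimage $NK$ is a transitive normal subgroup of $H$, one recovers $\soc(H)$ from $\soc(H^{\mathcal B})$ and $K$: the cases with $\soc(H^{\mathcal B})\cong T^k$ should yield $\soc(H)=\soc(G)$, regular on $\Omega$, so that $H$ is quasiprimitive of type TW — the first listed outcome; and the PA case should yield $\soc(H)=S^k$ acting on $\Omega$ in the product action — the second listed outcome. The delicate points are: controlling $K$ and showing it contributes nothing to $\soc(H)$ (otherwise $H$ would have an intransitive minimal normal subgroup, incompatible with either outcome); ruling out the alternating and symmetric possibilities for $H^{\mathcal B}$, since these would force a minimal normal subgroup of $H$ to be a large alternating group, irreconcilable with $\soc(G)\cong T^k$; and verifying that no further configuration survives. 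It is for these verifications that one relies on the structure theory of quasiprimitive inclusions developed in \cite{baddeley_praeger_2003}, and the sub-case in which $H$ is itself quasiprimitive can alternatively be settled by a direct appeal to that reference.
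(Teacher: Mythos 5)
Your reduction in the first step is fine: identifying the blocks with cosets of a subgroup $C$ of the regular socle $N\cong T^k$, observing that $\mathrm{core}_N(C)$ is normalised by $G_\omega$ and hence trivial, and concluding that $G\cong G^{\mathcal B}$ is quasiprimitive with socle $T^k$ of type SD, CD or PA is a correct and reasonable set-up. The genuine gap is that the third step, which is where the entire content of the lemma lives, is not carried out: you write that the $T^k$-socle cases ``should yield'' $\soc(H)=\soc(G)$ with $H$ of type TW and that the PA case ``should yield'' the product-action conclusion, and you explicitly list as unresolved the control of the kernel $K$ of $H$ on $\mathcal B$, the elimination of $\alt(\mathcal B)$ and $\sym(\mathcal B)$ as possibilities for $H^{\mathcal B}$, and the exclusion of other configurations. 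None of these is routine, and the reference you lean on for them, \cite{baddeley_praeger_2003}, is about \emph{primitive} overgroups of quasiprimitive groups, so it does not apply to the inclusion $G\leqslant H$ on the original point set, where $H$ is imprimitive. Indeed, without the (implicit) hypothesis that $H$ is quasiprimitive the dichotomy is false -- take $H$ to be the full imprimitive wreath product preserving $\mathcal B$, whose socle is a direct power of an alternating group and whose action on $\mathcal B$ is symmetric -- so any complete argument must use quasiprimitivity of $H$ to kill the kernel and the alternating/symmetric quotients, and your sketch never does this. There is also a secondary issue: you replace the given partition $\mathcal B$ by a maximal one so that $H^{\mathcal B}$ is primitive, but conclusion (2) is a statement about the partition $\mathcal B$ you started with, so the result you would prove is not quite the one stated.

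For comparison, the paper does not attempt this reduction at all: the lemma is exactly \cite[Theorem 2]{praeger_qp_inclusions_2003}, Praeger's theorem on inclusions of quasiprimitive permutation groups in which the overgroup is imprimitive, and the proof is a one-line citation. That theorem is precisely the ``structure theory'' your final paragraph appeals to, so the honest options are either to cite it directly (as the paper does) or to supply in full the kernel and socle analysis you have only gestured at; as written, your proposal replaces the hard part of the proof with a placeholder.
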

\begin{proof} This follows from \cite[Theorem 2]{praeger_qp_inclusions_2003}.
\end{proof}

If $G$ is a transitive permutation group acting on the set $\Omega$ and $Y$ is an intransitive normal subgroup of $G$, then the set of orbits of $Y$ on $\Omega$, denoted by $\Omega/Y$ forms a partition of $\Omega$ that is preserved by $G$.

 \section{A construction}
 
 Hypothesis: for some prime $p$ the following hold:
 \begin{enumerate}
 \item $P = V \rtimes Q$ is an affine $2$-transitive group of degree a power of $p$;
 \item  the stabiliser $Q_x$ of a non-zero vector $x \in V$,  has a non-trivial centre with order divisible by $p$;
 \item there is a non-abelian finite simple group $T$ and a homomorphism $\phi : Q \rightarrow \aut(T)$ such that $\phi(Q)$ contains $\mathrm{Inn}(T)$.
 \end{enumerate}
Note that the groups $q^2 : \SL(2,q)$ for a prime power $q\geqslant 4$, $ q^4 : \Sp(4,q)$ and  $3^6 : \SL(2,13)$  satisfy the hypothesis (see \cite[Table 7.3]{cameron}).
 
%Fix a prime $p\geqslant 5$. Let $T=\PSL(2,p)$, the non-abelian simple group.  Let $P = \ASL(2,p) = V \rtimes Q$ be the permutation group on $p^2$ points that is primitive of HA type, where $Q \cong \SL(2,p)$ and $V$ is the $2$-dimensional vector space over $\mathrm{GF}(p)$ on which $Q$ acts naturally.  Let $\phi : Q \rightarrow Q/Z(Q)$ be the canonical map, which we will view as the surjective group homomorphism $\phi: Q \rightarrow \mathrm{Inn}(T)$. 

Let $k=|V|$. 
Define $G = T \mathrm{twr}_\phi P$, and identify $P$ and $Q$ as subgroups of $G$. Let $N$ be the base group of $G$ so that 
$$N = \{ f : P\rightarrow T \mid f(hq) = f(h)^{\phi(q)}\text{ for all } h\in P, \, q\in Q \}.$$
For $g\in P$, the action of $h$ on $N$ is 
$$ f^g (p) = f (gp).$$
Fix a left transversal $z_1,\ldots,z_k$ to $Q$ in $P$ so that each $f\in N$ is uniquely determined by the images $f(z_1)$, $f(z_2)$, \ldots, $f(z_k)$. In this way, we have $N \cong T^k$. We let $T_i$ be the subgroup of $N$ consisting of the functions that map  $z_j$ to the identity of $T$ for all $j\neq i$ and let $N_ i = \prod_{j \neq i}T_i$. The action of $G$ by conjugation on the set $\{T_1,\ldots,T_k\}$ is permutationally equivalent to the action of of $P $ on the coset space $[P:Q]$. With respect to this, we   have $Q= N_P(T_k) = N_P( N_k)$.

Since $V$ is a complement to $Q$ in $P$, we may take the elements of $V$ to be a left transversal of $Q$ in $P$. We record the following observation: if $q\in Q$ is such that $qz_i = z_i q$, then for $f\in T_i$ we have
$$f^q(z_i) = f(qz_i) = f(z_iq) = f(z_i)^{\phi(q)}$$
so that $q$ induces on $T_i\cong T$ the automorphism $\phi(q)$.

We view $G$ as a permutation group on the set $[G:P]$. Since $\mathrm{core}_P(Q) = 1$, $G$ acts faithfully on $[G:P]$ and $G$ is a quasiprimitive group of twisted wreath type (see \cite[Section 2]{praeger}).

\begin{lemma}
 \label{lem: action of G on D1}
 The group $G$, as a permutation group on the set $[G:P]$, is quasiprimitive of TW type. The action is imprimitive, and $G$ preserves a unique (non-trivial) partition $\Pi$ which corresponds to the overgroup $PC_N( V \ker(\phi))$ of $P$. The induced action of $G$ on $\Pi$ is primitive of type SD and for $\pi \in \Pi$ the action of $G_\pi$ on $\pi$ is primitive of type HS.
 \end{lemma}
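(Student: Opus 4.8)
The first assertion of the lemma has already been established before its statement, so the plan is to analyse the subgroup lattice between $P$ and $G$ and to identify the two induced actions. Since $N$ is regular, $G=N\rtimes P$ and $[G:P]$ may be identified with $N$; then every subgroup $H$ with $P\leqslant H\leqslant G$ satisfies $H=(H\cap N)P$, so the overgroups of $P$ in $G$ correspond bijectively to the $P$-invariant subgroups of $N$, and the non-trivial $G$-invariant partitions of $[G:P]$ to those $M$ with $1<M<N$. Everything therefore reduces to the following claim, which is the heart of the matter: the only $P$-invariant subgroups of $N\cong T^k$ are $1$, $N$ and the full diagonal subgroup $D:=C_N(V)$, which equals $C_N(V\ker\phi)$ and consists of the functions $f_t\colon z_iq\mapsto t^{\phi(q)}$ $(t\in T)$. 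Granting this, $H:=DP=PC_N(V\ker\phi)$ is the unique proper overgroup of $P$; hence $G$ is imprimitive, $\Pi$ (identified with $[G:H]$) is its unique non-trivial block system, and $H$ is maximal in $G$, so $G^{\Pi}$ is primitive.

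I would prove the claim in three stages. (a) If $1\neq M\leqslant N$ is $P$-invariant, then $Q=N_P(T_k)$ normalises $M$ and acts on $T_k\cong T$ with image $\phi(Q)\supseteq\inn(T)$, so the projection of $M$ to $T_k$ is normal in the simple group $T_k$; as $P$ is transitive on $\{T_1,\dots,T_k\}$ this forces $M$ to project onto every $T_i$, i.e.\ $M$ is subdirect in $N$. (b) A subdirect subgroup of $T^k$ is a direct product of full diagonal subgroups along the blocks of a partition of $\{1,\dots,k\}$, and $P$-invariance makes this partition $P$-invariant, hence trivial; so either $M=N$ or $M$ is a full diagonal subgroup of $N$. (c) Taking the elements of $V$ as the transversal, $V$ permutes the coordinates of $N=\prod_{z\in V}T_z$ regularly and without twisting, so a $V$-invariant full diagonal subgroup has the form $\{(\gamma_z(t))_{z\in V}:t\in T\}$ for a homomorphism $\gamma\colon V\to\aut(T)$, and $Q$-invariance becomes $\gamma(qzq^{-1})=\phi(q)^{-1}\gamma(z)\phi(q)$ for all $q\in Q$. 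Then $\ker\gamma$ is a $Q$-submodule of $V$; since $P$ is $2$-transitive, $Q$ acts faithfully and irreducibly on $V$, so $\ker\gamma\in\{0,V\}$. The case $\ker\gamma=0$ is impossible: the equivariance shows $\phi(Q)$ normalises $\gamma(V)$, so $\gamma(V)\cap\inn(T)$ is a $p$-subgroup normal in $\inn(T)\cong T$ and hence trivial; feeding this back (with $q$ ranging over $\phi^{-1}(\inn(T))$) forces the non-trivial subgroup $\phi^{-1}(\inn(T))$ of $Q$ to centralise $V$, contradicting faithfulness. Hence $\gamma$ is trivial and $M=D$. Stage (c), where the hypotheses that $P$ is affine $2$-transitive and that $\phi(Q)\supseteq\inn(T)$ are genuinely combined, is the step I expect to require the most care.

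It remains to name the two induced actions. For $G^{\Pi}$: $N$ is the unique minimal normal subgroup of $G$ and $N\not\leqslant H$, so $\mathrm{core}_G(H)=1$ and $G$ acts faithfully on $\Pi$; moreover $\soc(G)=N\cong T^k$, the point stabiliser meets the socle in $N\cap H=D$ (a full diagonal subgroup), and $G/N\cong P$ acts $2$-transitively — so primitively — on the $k$ simple direct factors of $N$, which is exactly the configuration defining type SD. For the action of $G_\pi$ on $\pi$: one has $(G_\pi,\pi)\cong(H,[H:P])$ with $|\pi|=|D|=|T|$, and a short computation with the $f_t$ gives $f_t^{v}=f_t$ for $v\in V$ and $f_t^{q}=f_{t^{\phi(q)}}$ for $q\in Q$, so $P$ acts on $D\cong T$ with image $\phi(Q)$ and kernel $V\ker\phi$. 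As $D$ is simple and $\phi(Q)\supseteq\inn(T)$, this shows both that $P$ is maximal in $H$ (whence $G_\pi$ is primitive on $\pi$) and that $\mathrm{core}_H(P)=C_P(D)=V\ker\phi$. Therefore $G_\pi^{\pi}\cong H/(V\ker\phi)\cong D\rtimes\phi(Q)$ is a subgroup of $\mathrm{Hol}(T)=T\rtimes\aut(T)$ that contains $T\rtimes\inn(T)$ and acts on $T$ with point stabiliser $\phi(Q)$; its socle is the direct product of $D$ and $C_{D\rtimes\phi(Q)}(D)$, both isomorphic to $T$ and both regular, so $G_\pi^{\pi}$ has type HS. (Only hypotheses (i) and (iii) are used here; hypothesis (ii) will enter later, when one checks that the coset graph built from this amalgam is locally $2$-arc transitive.)
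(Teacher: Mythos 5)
Your proposal is correct, and its skeleton is the one the paper uses: overgroups of $P$ in $G$ correspond to $P$-invariant subgroups of $N$; such a subgroup is subdirect because $Q$ induces at least $\inn(T)$ on $T_k$; Scott's Lemma plus primitivity of $P$ on the simple factors reduces it to a full diagonal; and then one identifies $G^\Pi$ as SD and $G_\pi^\pi$ as HS. Where you genuinely diverge is in pinning down the diagonal and in how primitivity is obtained. The paper studies the conjugation homomorphism $c\colon P\to\aut(M)\cong\aut(T)$, kills $c(V)$ because it is an abelian subgroup normalised by $\inn(T)$, deduces $M\leqslant C_N(V\ker(\phi))$, and then separately exhibits $C_N(V\ker(\phi))=\hat N\cong T$ via the extension $\hat\phi$ of $\phi$ with kernel $V\ker(\phi)$, observing that $Q$ normalises no proper non-trivial subgroup of $\hat N$; you instead parametrise an arbitrary $P$-invariant full diagonal by a homomorphism $\gamma\colon V\to\aut(T)$ (legitimate since $V$ permutes the coordinates regularly and without twisting) and use faithfulness and irreducibility of $Q$ on $V$ together with $\inn(T)\leqslant\phi(Q)$ to force $\gamma=1$, which yields existence and uniqueness of $D=C_N(V)=C_N(V\ker(\phi))$ in one sweep. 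Likewise you derive primitivity of $G^\Pi$ from maximality of $H=DP$ and primitivity of $G_\pi^\pi$ from maximality of $P$ in $H$, where the paper appeals to standard facts about SD and HS groups; and you treat the possibility $\phi(Q)\supsetneq\inn(T)$ explicitly, whereas the paper writes $G_\pi^\pi\cong T\rtimes\inn(T)$. Both routes ultimately combine the same two ingredients, $P=V\rtimes Q$ with $V$ abelian and $\inn(T)\leqslant\phi(Q)$, so what your version buys is mainly self-containedness and an explicit description of the unique block system; the only blemish is a harmless left/right convention mismatch in your equivariance formula $\gamma(qzq^{-1})=\phi(q)^{-1}\gamma(z)\phi(q)$, which does not affect the argument.
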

 \begin{proof}
 First we gain some insight on the possible partitions preserved by $G$. Since $G=NP$, if $P \leqslant H < G$, then $H = P ( H \cap N)$. Set $M=H \cap N$ and note that  $M$  is normalised by $P$. Since $P$ is transitive on the $k$ simple direct factors of $N$, the projections of $M$ to the simple direct factors are isomorphic. Further, since the projection of $M$ to $T_k$ is normalised by $Q$ and $Q$ induces $\mathrm{Inn}(T_k)$ on $T_k$ by conjugation,  each projection is either trivial (and hence $M=1$) or each projection is surjective. 
 %Further, $M \cap T_1$ is a subgroup of $T_1$ normalised by $Q$, and so either $M$ contains $T_1$ (and thus $M=N$) or $M \cap T_1 = 1$. Hence $M \cap T_i =1 $ for all $i$.
  It follows that $M$ is a subdiagonal subgroup of $N$, and therefore by Scott's Lemma (see \cite[Theorem 4.16]{PraegerSchneider}) that $M$ is a strip, that is, $M=  \prod_{i\in I} D_i$ where $I$ forms a partition of $\{1,\ldots,k\}$ and each $D_i$ is a diagonal subgroup of $\prod_{ j\in I_i} T_j$ where $I = I_1 \cup \ldots \cup I_{|I|}$. Clearly the set $I$ forms a partition preserved by $G$. Since $G$ is primitive on the set $\{T_1,\dots,T_k\}$ of simple direct factors of $N$, $I$ is a trivial partition. Since $M$ is a proper subgroup of $N$, we have $|I|=1$ and thus $M$ is a full diagonal subgroup of $N$. Hence $M\cong T$. Since $P$ normalises $M$, we have a homomorphism $c : P \rightarrow \aut(T)$ which is the map induced by conjugation. Since $Q$ induces $\mathrm{Inn}(T)$ on $T_k\cong T$ we see that $c(Q) = \mathrm{Inn}(M)$. This means $\inn(M)$  normalises $c(V)$, and since $c(V)$ is elementary abelian, the only possibility is that $c(V)=1$. Hence $V \leqslant C_P(M)$ and $C_P(M) = V C_Q(M)$. Again, since $Q$ must induce $\mathrm{Inn}(T)$ on $M \cong T$, we have that $C_Q(M)=\ker(\phi) = Z(Q)$ so that $C_P(M)=V \ker(\phi)$ and $M \leqslant C_N(V \ker(\phi))$.
  %
 % {\color{red} Delete?: Now, for any $z_i \in V$ and $f\in M$ we have $f^{z_i}=f$, and since we may assume $z_k=1$, if  $t\in T$ is such that $f(z_k)=t$ we have
 % $$ f(z_i)  = f^{z_i}(z_k) =f^{z_i}(1)  =  f (1) = t$$
 % so that $M$ is  the subgroup of $N$ consisting of functions mapping each coset of $Q$ to the same element of $T$. Now, for $r\in \ker(\phi)$ we have $\ker(\phi)V = V \ker(\phi)$ so that for any $i$ we have $r z_i = z_j r'$ for some $r'\in \ker(\phi)$ and hence  for $f\in M$ we obtain
 % $$f^r (z_i) = f(rz_i) = f(z_j r') = f(z_j)^{\phi(r')} = f (z_j) = f(z_i)$$
 % so that $\ker(\phi)$ centralises $M$. Hence $M \leqslant C_N(V \ker(\phi))$.}
  %
% k = kernel of action then k is normal in P. if k =1, then
%c(vq) = c(v) c(q) but c(q) must be inn (t)
% so c(q) is normal and [c(v) , c(q) ]=1 contradiction. 
% q phi = int
  %
%  and the kernel of the action of $P$ on $M$ must contain $\ker(\phi)$. Since all normal subgroups of $P$ contain $V$, we have  $M \leqslant C_N(V \ker(\phi))$. 
  Thus we have proved that the only overgroups of $P$ correspond to subgroups of $C_N(V\ker(\phi))$ that are normalised by $Q$. We proceed to analyse the action of $Q$ on $C_N(V\ker(\phi))$ to find  all partitions that are preserved by $G$.
 
Since $Q$ is maximal in $P$, the map $\phi :Q \rightarrow \aut(T)$ can be extended (uniquely)  to the whole of $P$, and this extension is a homomorphism $\hat{\phi}$ with kernel $V\ker(\phi)$.  The twisted wreath product $ T \mathrm{twr}_{\hat{\phi}} P$ shows that $G$ is imprimitive, since the subgroup
$$\hat{N} = \{ f : P \rightarrow T \mid f(hq) = f(h)^{\hat{\phi}(q)} \text{ for all } h,q\in P \}$$
is a subgroup of $N$ that is normalised by $P$. Further, $\hat{N} \cong T^\ell$ where $\ell=|P:P|=1$. The subgroup $\hat{N}$ can be identified as the ``first-coordinate'' subgroup of the twisted wreath product  $T \mathrm{twr}_{\hat{\phi}} P$, so that  $\hat{N} = \{ f_t : t\in T\}$ where $f_t : P \rightarrow T$ is defined by
$$f_t (h) = t^{\hat{\phi}(h)}$$
and in this way, for any $r \in V\ker(\phi) $ and for any $h\in P$ we have 
$$(f_t)^r (h) = f_t(rh) = t^{\hat{\phi}(rh)} = t^{\hat{\phi}(h)} = f_t(h)$$
so that $V\ker(\phi)$ centralises $\hat{N}$. If $q\in Q \setminus \ker(\phi)$, then there is $t\in T$ such that $t^{\phi( q)} \neq t$. Thus 
$$(f_t)^q (1) = f_t(q) = t^{\hat{\phi}(q)}=t^{\phi(q)} \neq t = f_t(1)$$
so that $Q$ acts non-trivially on $\hat{N}$. Hence $\hat{N} = C_N( V\ker(\phi) )$. In particular, since $Q$ normalises no proper non-trivial subgroup of $C_N(V\ker(\phi))$, the only possible overgroups of $P$ in $G$ are $C_N(V\ker(\phi))P$ and $G$ itself.

Let $\Pi$ denote the  partition  that is given by the overgroup $\hat{N} P$ and let $\pi$ denote the orbit of $\hat{N}$ so that $G_\pi=\hat{N}P$. Since $G$ is quasiprimitive, the action of $G$  on $\Pi$ is faithful and quasiprimitive. Further, since $N_\pi = \hat{N}$ is a full diagonal subgroup of $N$, the action is of SD type. The action of $G$ on the set of simple direct factors of $N$ is equivalent to the action of $P $ on the set of  vectors of $V$, and is therefore primitive. Hence $G^{\Pi}$ is a primitive group of SD type.

%Corresponding to the extension of $\phi$ to $P$, there is a subgroup $M$ of $N$ that is normalised by $P$.  In fact $M=C_N(VZ(Q))$ since $VZ(Q)$ simply permutes the normal subgroups of $N$ isomorphic to $T$. Hence $VZ(Q)$ normalises a ``straight'' diagonal subgroup, and we let $\Pi$ denote the system of imprimitivity so that   $G_ u = P \leqslant PM = G_\pi$, where $\pi=u^M$.  

Finally, consider  the action of $G_\pi = \hat{N}P$ on $\pi$. The kernel of the action is the largest subgroup of $P$ normalised by $\hat{N}$, and such a subgroup must commute with $\hat{N}$. From above, we have that the kernel of the action is  $V\ker(\phi)$. Thus 
$$G_\pi^\pi = \hat{N} \rtimes P / (V \ker(\phi)) = \hat{N} \rtimes \phi(Q) \cong T \rtimes \mathrm{Inn}(T)$$
and since $N_\pi^\pi \cong T $ is a normal subgroup,  $G_\pi^\pi$ is primitive of type HS.
%
%The group $G^\Pi$ is primitive, of type SD since the point stabiliser $N_\pi$ is a ``straight'' diagonal subgroup of $N$.
 \end{proof}

\begin{lemma}
There exists a subgroup $R$ of $N_{k}$ of order $|V|$ that is normalised by $Q$, and $RQ \cong P$.
\end{lemma}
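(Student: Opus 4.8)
Here is how I would attack the statement.

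The plan is to realise a second copy of $V$, as a $Q$-module, inside the base group $N$ -- more precisely inside $N_k$. Suppose we can produce an injective $Q$-equivariant homomorphism $\iota\colon V\to N_k$, where $Q$ acts on $V$ by conjugation inside $P$ and on $N_k\le N$ by conjugation inside $G$. Setting $R=\iota(V)$, the group $R$ is then elementary abelian of order $|V|$, is normalised by $Q$, and satisfies $R\cap Q\le N\cap P=1$, so that $RQ=R\rtimes Q$ has order $|V|\,|Q|=|P|$; and since $\iota$ is an equivariant homomorphism, $(v,q)\mapsto\iota(v)q$ is an isomorphism $V\rtimes Q\to RQ$, that is $RQ\cong P$. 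So the whole statement reduces to constructing $\iota$.

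To build $\iota$, fix coordinates using $V$ itself as the left transversal of $Q$ in $P$: then $N\cong\prod_{z\in V}T_z$ with each $T_z\cong T$, and $N_k=\prod_{z\in V\setminus\{0\}}T_z$. A direct computation from $f^g(p)=f(gp)$ shows that conjugation by $q\in Q$ permutes the coordinates according to the linear action of $q$ on $V$ and twists each coordinate by a fixed automorphism lying in $\phi(Q)$; in particular, for the non-zero vector $x$ of Hypothesis~(ii) the projection $N_k\to T_x$ is equivariant for $Q_x$ acting on $T_x\cong T$ through $\phi|_{Q_x}$. Since $V\setminus\{0\}\cong Q/Q_x$ as $Q$-sets, a Frobenius-reciprocity-type correspondence identifies $Q$-equivariant homomorphisms $V\to N_k$ with $Q_x$-equivariant homomorphisms $\alpha\colon V\to T$: choosing for each $z\in V\setminus\{0\}$ some $s_z\in Q$ with $s_z\cdot x=z$ (possible as $P$ is $2$-transitive), one puts $\iota(v)_z:=\alpha(s_z^{-1}\cdot v)^{\phi(s_z)}$ and checks, using the $Q_x$-equivariance of $\alpha$, that this is well defined and a $Q$-equivariant homomorphism. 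Moreover $\ker\iota=\bigcap_{q\in Q}q\cdot\ker\alpha$ is a $Q$-submodule of $\ker\alpha$, hence is $0$ whenever $\alpha\ne 1$ because $V$ is an irreducible $Q$-module; so $\iota$ is automatically injective.

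It remains to produce a non-trivial $Q_x$-equivariant homomorphism $\alpha\colon V\to T$. By Hypothesis~(ii) choose $c\in Z(Q_x)$ of order $p$. By Hypothesis~(iii) -- together with the fact that, for the homomorphisms $\phi$ occurring here, $\phi$ is injective on the $p$-elements of $Q_x$ and sends them into $\mathrm{Inn}(T)$ -- the automorphism $\phi(c)$ is a non-trivial inner automorphism $\gamma_t$ with $t\in T$ of order $p$; and since $c$ centralises $Q_x$, the element $t$ is fixed by $\phi(Q_x)$, so $\langle t\rangle\cong C_p$ is a $\phi(Q_x)$-invariant subgroup on which $\phi(Q_x)$ acts trivially. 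A homomorphism $\alpha\colon V\to\langle t\rangle$ is then $Q_x$-equivariant exactly when it is constant on $Q_x$-orbits, i.e.\ factors through $V/[V,Q_x]$; and $V/[V,Q_x]\ne 0$ because $V$ carries a non-degenerate $Q$-invariant form -- so is self-dual as a $Q$-module, hence as a $Q_x$-module -- and $x$ is a non-zero $Q_x$-fixed vector, giving $\dim V/[V,Q_x]=\dim V^{Q_x}\ge 1$. Composing $V\to V/[V,Q_x]$ with any non-zero homomorphism to $\langle t\rangle$ yields the required $\alpha$.

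The step I expect to be the real obstacle is this last one: producing the order-$p$ element $t\in T$ fixed by $\phi(Q_x)$, which is where Hypotheses~(ii) and~(iii) have to be made to interlock (in particular this forces $p\mid|T|$), and controlling $[V,Q_x]$, which rests on the fact that the $2$-transitive affine groups $P$ admitting a non-zero vector whose stabiliser has centre of order divisible by $p$ are very restricted -- essentially $\SL(2,q)$, symplectic groups, and the exceptional case $3^6 : \SL(2,13)$ -- together with the self-duality of $V$ in each of these cases.
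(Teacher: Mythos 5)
Your proposal is correct and follows essentially the paper's own argument: your element $t\in T$ of order $p$ fixed by $\phi(Q_x)$ is the paper's subgroup $U_1\leqslant T_1$, your image $\iota(V)$ lies inside the paper's permutation module $W=\prod_i U_i\leqslant N_k$, and your Frobenius-reciprocity step is the paper's, run in the dual direction (you seek a nonzero element of $\mathrm{Hom}_Q(V,\mathrm{Coind}_{Q_x}^Q\langle t\rangle)$ via coinvariants and self-duality of $V$, whereas the paper maps $W=\mathrm{Ind}_{Q_x}^Q U_1$ onto $V$ and then uses self-duality of $W$). The extra facts you flag---that the central $p$-element of $Q_x$ has nontrivial inner image under $\phi$, and that $V$ is self-dual---are relied upon just as implicitly in the paper's proof (its claim that $Q_1$ induces inner automorphisms and hence normalises $U_1$, and its identification of the submodule $R$ of $W$ with $V$ rather than $V^*$), so they do not constitute gaps relative to it.
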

\begin{proof}
Recall that $Q=N_P(T_k)=N_P(N_k)$ and for $i=1,\ldots,k-1$, set $Q_i = N_Q(T_i)$. By our hypothesis, $Q_i$ has non-trivial centre with order divisible by $p$. Further, $Q_1$ induces inner automorphisms on $T_1$ corresponding to the image of $Q_1$ under the map $\phi$, and therefore normalises a subgroup $U_1$ of order $p$. For $g\in Q$ such that $(T_1)^g=T_i$, let $U_i = (U_1)^g$ (and note that the definition of $U_i$ is independent of the choice of $g$ since $Q_1=N_Q(T_1)$). Let $W = \langle U_1,\ldots,U_{k-1} \rangle$. Since $U_i \leqslant T_i$, we have $[U_i,U_j]=1$ for all $i,j$. Thus $W$ is an elementary abelian group of order $p^{k-1}$ and $W$ is normalised by $Q$. Further, since $U_1$ is the trivial module for $Q_1$, $W$ is the permutation module for $Q$ of dimension  $k-1$ and there is a basis for $W$ which $Q$ permutes as it does the set of non-zero vectors of $V$.

(Aside: to see this directly, pick $u_1 \in U_1$ so that $U_1 = \langle u_1\rangle$. Then set $u_i = (u_1)^g$ if $g\in Q$ is such that $(T_1)^g = T_i$. Let $z_1,\ldots,z_{k-1}$ be a right transversal to $Q_1$ in $Q$, then for $g\in Q$ there is $r\in Q_1$ and $i$ such that $g=rz_i$, then
$$(u_1)^g = (u_1)^{rz_i} = u_1^{z_i} = u_i$$
and hence for any $j$, there is $h\in Q$ such that 
$$(u_j)^g = ((u_1)^h)^g = (u_1)^{hg} = u_s$$
for some $s$. Hence  $Q$ simply permutes the elements $\{u_1,\ldots,u_{k-1}\}$ which form a generating set for $W$, and therefore $W$ is simply the permutation module for $Q $ with a basis corresponding to the non-zero vectors of $V$.)

%(For next part could quote lemma 1 in Serre linear reps , pg 27)

Since  $Q_1$ fixes the vector $x \in V$, it preserves the $1$-dimensional subspace $\langle  x \rangle$ of $V$ that is isomorphic to $U_1$. Thus there is an injective $Q_1$-module homomorphism $f: U_1 \rightarrow V$. By \cite[Frobenius Reciprocity Theorem, pg.~165]{alperin_bell_1995}, there is a $Q$-module homomorphism $F: W \rightarrow V$ such that $F$ extends $f$. In particular, $F$ is also non-zero, and therefore $F$ is surjective since $V$ is an irreducible $Q$-module. Thus there is a submodule $Y$ of $W$ such that $W/Y \cong V$.  Now $W$ is self-dual since the permutation module supports a $Q$-invariant symmetric bilinear form. Hence there is  a submodule, $R$  say, of $W$ such that $R \cong V$ as a $Q$-module. Hence $Q$ acts on $R$ as it does on $V$, and $RQ \cong P$.
%
%
%By Frobenius Reciprocity, $\mathrm{Hom}_Q(W,V) = \mathrm{Hom}_{Q_1}(U, V)$, where $V$ is the natural module for $Q\cong \SL(2,q)$. Since $V$ is reducible as a $Q_1$ module, and $Q_1$ acts trivially on a $1$-dimensional subspace, we have that $|\mathrm{Hom}_Q(W,V) | =|\mathrm{Hom}_{Q_1}(U,V)|> 1$. Since $V$ is an irreducible $Q$ module, there is a  $Q$-module epimorphism $W\rightarrow V$. Now $W$ is self-dual since the permutation module  supports a symmetric bilinear form.  Hence there is also an injective $Q$-module homomorphism $V\rightarrow W$, and we let $R$ be the image of the latter map. Hence $Q$ acts on $R$ as it does on $V$, and $RQ \cong \ASL(2,p)$.
\end{proof}

\begin{remark}
Our hypothesis does not hold for $G= \ASL(5,2)$. In this case, for a non-zero vector $x$ we have  $G_x \cong 2^4 : \SL(4,2)$, so that $G_x$ has trivial centre. If $W$ is the module for $G$ induced from the natural module of $G_x/O_2(G_x) = \SL(4,2)$, then computations in {\sc Magma}~\cite{MR1484478} show that there is no submodule of $W$ of dimension $5$.
\end{remark}

Let $\Gamma$ be the coset graph $\mathrm{Cos}(G,P,RQ)$. Let $u$ and $v$ denote  the cosets $P$ and $RQ$ respectively, so that $G_u = P$ and $G_v = RQ$. Let $\Delta_1=u^G$ and $\Delta_2 = v^G$.
 
\begin{lemma}\label{lem: gamma connected}
The graph $\Gamma$ is connected and of valency $k=|V|$. The group $G$ acts faithfully on the two orbits, $\Delta_1$ and $\Delta_2$ and is locally $2$-arc transitive.  The group $G^{\Delta_1}$  is quasiprimitive of TW type. The group $G^{\Delta_2}$ is not quasiprimitive; $N$ is intransitive with $k$ orbits.
\end{lemma}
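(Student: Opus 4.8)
The strategy is to verify each assertion in turn, leaning on the coset graph machinery of Lemma~\cite[Lemma 3.7]{giudici_li_praeger_2003} and on the structure of $G=T\,\mathrm{twr}_\phi P$ established above. First I would handle \textbf{connectivity}: by the cited lemma, $\Gamma$ is connected iff $\langle P, RQ\rangle = G$. Since $P\leqslant \langle P,RQ\rangle$ and $P$ is maximal in $NP = G$ only through the intermediate overgroup $\hat N P$ identified in Lemma~\ref{lem: action of G on D1}, it suffices to show $R\not\leqslant \hat N = C_N(V\ker\phi)$. This follows because $R$ is a nontrivial subgroup of $N_k$, and $\hat N$ is a full diagonal subgroup of $N$, so $\hat N\cap N_k = 1$; hence $\langle P, RQ\rangle$ is not contained in $\hat N P$, and by the overgroup analysis of Lemma~\ref{lem: action of G on D1} (the only overgroups of $P$ are $P$, $\hat N P$ and $G$) we conclude $\langle P, RQ\rangle = G$.

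Next, \textbf{valency and local transitivity}. The neighbours of $u = P$ are the cosets $Ry$ meeting $P$, and standard double-coset counting gives valency $|P : P\cap RQ|$ at $u$ and $|RQ : P\cap RQ|$ at $v$. One computes $P\cap RQ = Q$ (using $R\leqslant N$, $P\cap N = 1$, and $RQ/R\cong Q$), so the valency at $u$ is $|P:Q| = |V| = k$ and the valency at $v$ is $|RQ:Q| = |R| = |V| = k$; in particular $\Gamma$ is biregular of valency $k$. For local $2$-arc transitivity I would invoke the criterion that $\Gamma = \mathrm{Cos}(G,L,R)$ is locally $(G,2)$-arc transitive iff $L$ acts $2$-transitively on $[L:L\cap R]$ and $R$ acts $2$-transitively on $[R:L\cap R]$. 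Here $L\cap R = Q$; the action of $P$ on $[P:Q]$ is the given affine $2$-transitive action by Hypothesis~(i), and the action of $RQ$ on $[RQ:Q]$ is permutationally equivalent to the action of $P$ on $[P:Q]$ via the isomorphism $RQ\cong P$ carrying $R$ to $V$ and $Q$ to $Q$ — hence also $2$-transitive.

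Then \textbf{faithfulness and quasiprimitivity}. By Lemma~\cite[Lemma 3.7(iii)]{giudici_li_praeger_2003}, $G$ acts faithfully on $\Delta_1$ and on $\Delta_2$ iff $P$ and $RQ$ are core-free in $G$. That $P$ is core-free is already used above to see $G$ is quasiprimitive of TW type on $[G:P] = \Delta_1$, giving the claim $G^{\Delta_1}$ is quasiprimitive of TW type. For $RQ$: its core is the largest normal subgroup of $G$ contained in $RQ$; since $N$ is the unique minimal normal subgroup of $G$ (as $G$ is TW type with socle $N$) and $N\not\leqslant RQ$ because $|RQ\cap N| = |R| = k < |N| = |T|^k$, the core of $RQ$ is trivial. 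Finally, for $G^{\Delta_2}$: the point stabiliser $G_v = RQ$ contains $R\leqslant N_k$ but not the full factor $T_k$, so $N$ is \emph{not} transitive on $\Delta_2 = [G:RQ]$; the orbits of $N$ on $\Delta_2$ correspond to the double cosets $N\backslash G / RQ$, and since $G = NP$ and $P\cap N = 1$ these biject with $\{T_1,\dots,T_k\}$-type data — more precisely $N g RQ$ is determined by the coset $Q$ acts on, giving exactly $k$ orbits. Hence $N$, a nontrivial normal subgroup, is intransitive on $\Delta_2$ with $k$ orbits, so $G^{\Delta_2}$ is not quasiprimitive.

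The \textbf{main obstacle} I anticipate is pinning down $P\cap RQ = Q$ and the count of $N$-orbits on $\Delta_2$ cleanly: both require careful bookkeeping with the semidirect-product structure $G = N\rtimes P$ and the fact that $R$ lies inside a single "missing-coordinate" subgroup $N_k$ rather than being diagonal. Once the intersection is identified as $Q$, everything else — valency, the $2$-transitivity transport along $RQ\cong P$, and the orbit count via double cosets — follows by routine computation. I would also double-check that $R\cong V$ \emph{as a $Q$-module} (not merely as a group) is what licenses the permutational equivalence needed for $2$-transitivity of $RQ$ on $[RQ:Q]$; this is exactly the content of the preceding lemma, so it can be quoted.
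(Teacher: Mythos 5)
Your proposal is correct and follows essentially the same route as the paper: connectivity via the overgroup analysis of Lemma~\ref{lem: action of G on D1} (showing $R\nleqslant \hat N$), the intersection $P\cap RQ=Q$ for the valency, local $2$-arc transitivity from the $2$-transitive actions of $P$ on $[P:Q]$ and of $RQ\cong P$ on $[RQ:Q]$, core-freeness for faithfulness, and intransitivity of $N$ on $\Delta_2$ with $|G:NQ|=|P:Q|=k$ orbits. Your micro-variants (using $\hat N\cap N_k=1$ for a full diagonal subgroup instead of the $\mathrm{Inn}(T)$ argument, and the semidirect-product computation of $P\cap RQ$ instead of maximality of $Q$) are fine; only the remark that ``$RQ$ does not contain $T_k$'' is not by itself the reason $N$ is intransitive, but your double-coset count of $k\geqslant 2$ orbits makes that point correctly anyway.
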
 
 \begin{proof}
 By Lemma~\ref{lem: action of G on D1}, $PC_N(V \ker(\phi))$ is the unique overgroup of $P$ in $G$, and since $Q$ induces the inner automorphism group on $C_N(V \ker(\phi)))\cong T$, we have that   $R \nleqslant C_N(V \ker(\phi))$. Thus  $G = \langle P, RQ \rangle$ and $\Gamma$ is connected. From the maximality of $Q$ in $P$ and in $RQ$, we have  $Q = P \cap RQ $. It follows that $\Gamma$ has valency $k$. Clearly $P$ and $RQ$ have trivial core in $G$, and therefore $G$ is faithful on both $\Delta_1$ and $\Delta_2$. The actions of $P$ and $RQ$ on the sets $[P:Q]$ and $[RQ:Q]$ are both  equivalent to the $2$-transitive action of $P$ on  $k$ points and hence $G$ is locally $2$-transitive. The action of $G$ on $\Delta_1$ is as the  TW group first constructed above. Since $RQ \leqslant NQ < G$, the action of $G$ on $\Delta_2$ is not quasiprimitive, and $N$ has $|G:NQ|=|P:Q|=k$ orbits on $\Delta_2$.
 \end{proof}

% \section{The action of $G$ on $\Delta_1$ and on $\Delta_2$.}
 
% In order to understand the full automorphism group of the graph $\Gamma$ constructed in the previous section, we first gain a concrete understanding of the action of $G$. 
%We continue with the notation from the last section, and further, 

 %%%% COMMENTING BELOW AS I DON'T THINK WE NEED IT
% 
% \begin{lemma}
%The action of $G$ on $\Delta_2$ is not quasiprimitive; $N$ has $p^2$ orbits and the action of $G$ on the set of $N$-orbits is primitive of type $HA$.
% \end{lemma}
% \begin{proof}
% Since $NG_v = NQ$, the group $N$ is intransitive on $\Delta_2$. The number of orbits is $|G:NG_v| = |P:Q|=p^2$, and on the set of orbits $G$ induces $G/N \cong ASL(2,p)$.
% \end{proof}
% 
% 
% Remark: Although $G$ acts primitively on the set of $N$ orbits, there are many systems of imprimitivity preserved by $G$, and in fact if $\sigma$ is an $N$-orbit in $\Delta_2$, then $G_\sigma = NQ$ is imprimitive with two obvious systems of imprimitivity -- the orbits of $N_1$ and of $T_1$. On the set of orbits of $T_1$, the group $G_\sigma$ induces a quasiprimitive group with shape $T^{p^2-1} \rtimes \SL(2,p)$ which is quasiprimitive of PA type. The systems of imprimitivity correspond to the overgroups of $V$ in $N$ that are normalised by $Q$, which we do not yet understand.
% 
% 
% 
 
 \section{The automorphism group of $\Gamma$}
 
 We continue with the notation from the previous section.
Since $\Gamma$ is bipartite, we let $A$ be the normal subgroup of $\aut(\Gamma)$ that preserves the bipartition which has index at most two in $\aut(\Gamma)$. We will establish that the actions of $A$ on the two parts of $\Gamma$ are not equivalent, and this will show that $A=\aut(\Gamma)$. Let $s\in \mathbb N$ be such that $A$ is locally $s$-transitive. Since $\Gamma$ is locally $(G,2)$-arc transitive, we have that $s\geqslant 2$.

\begin{lemma}
\label{lem:not alt}
If $1\neq X$ is a normal subgroup of $A$ that is intransitive on   $\Delta_1$, then the set of orbits of $X$ on $\Delta_1$ is $\Pi$ and  $A^\Pi$ is primitive. Further, if $X$ is intransitive on $\Delta_2$, then $A^\Pi$ does not contain $\alt(\Pi)$.
\end{lemma}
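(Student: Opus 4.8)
Here is the plan. Let $\mathcal X$ denote the set of $X$-orbits on $\Delta_1$. Since $X\trianglelefteq A$ and $G\leqslant A$ is transitive on $\Delta_1$, $\mathcal X$ is a partition of $\Delta_1$ invariant under $G$. By Lemma~\ref{lem: action of G on D1}, $G^{\Delta_1}$ is quasiprimitive of TW type, is imprimitive, and preserves a unique non-trivial partition, namely $\Pi$; hence the only $G$-invariant partitions of $\Delta_1$ are the trivial ones and $\Pi$. As $X$ is intransitive on $\Delta_1$, $\mathcal X$ is either the partition into singletons or it equals $\Pi$.

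The crucial step — and the one I expect to require real work — is to exclude the first case, i.e.\ to show that no non-trivial normal subgroup of $A$ acts trivially on $\Delta_1$, equivalently that $\ker(A\to\sym(\Delta_1))=1$. This kernel consists exactly of the graph automorphisms fixing $\Delta_1$ pointwise, that is, of permutations of $\Delta_2$ preserving every neighbourhood; so it suffices to prove that distinct vertices of $\Delta_2$ have distinct neighbourhoods. I would verify this directly in the coset graph. Since $N\cap P=1$ and $|N|=|G:P|=|\Delta_1|$, the map $n\mapsto Pn$ is a bijection $N\to\Delta_1$; a short calculation (using $rq=q\cdot r^{q}$ and $Pnp=Pn^{p}$ for $q,p\in P$, $n,r\in N$) shows that under this bijection the neighbourhood of the vertex $(RQ)g$, with $g=n_0p_0$, $n_0\in N$, $p_0\in P$, is the right coset $R^{p_0}n_0^{p_0}$ of $R^{p_0}$ in $N$; for $g=1$ this is $R$ itself. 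Thus $RQ$ and $(RQ)g$ have the same neighbourhood only if the coset $R^{p_0}n_0^{p_0}$ is the subgroup $R$, forcing $R^{p_0}=R$ and $n_0\in R$, so $g\in R\cdot N_P(R)$. Finally $N_P(R)=Q$: from the construction $R\leqslant W\leqslant N_k$ with $R\cong V$ as a $Q$-module, and $Q$ permutes the simple factors $T_1,\dots,T_{k-1}$ of $N_k$ transitively (as it does the non-zero vectors of $V$); hence $R$ has non-trivial projection to each of $T_1,\dots,T_{k-1}$ but trivial projection to $T_k$, so any element of $P$ normalising $R$ fixes $T_k$ and therefore lies in $N_P(T_k)=Q$. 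Hence $g\in RQ$, $(RQ)g=RQ$, and $\ker(A\to\sym(\Delta_1))=1$. Consequently $\mathcal X=\Pi$.

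In particular $\Pi$ is $A$-invariant, so $A^\Pi$ is defined and contains $G^\Pi$, which is primitive by Lemma~\ref{lem: action of G on D1}. A subgroup of $\sym(\Pi)$ containing a primitive group is primitive (an invariant partition for it is invariant for the primitive subgroup, hence trivial), so $A^\Pi$ is primitive.

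For the final statement, suppose in addition that $X$ is intransitive on $\Delta_2$; let $\mathcal C$ be the set of $X$-orbits on $\Delta_2$, so $|\mathcal C|\geqslant2$, and assume for a contradiction that $A^\Pi\geqslant\alt(\Pi)$. Since $A$ is transitive on $\Delta_2$, it is transitive on $\mathcal C$. For $c\in\mathcal C$ set $\sigma(c)=\{\pi\in\Pi:\pi\cap\Gamma(w)\neq\emptyset\text{ for some }w\in c\}$; then $\sigma(c^a)=\sigma(c)^a$ for $a\in A$, so $\mathcal S:=\{\sigma(c):c\in\mathcal C\}$ is a non-empty $A^\Pi$-invariant family of $d$-element subsets of $\Pi$, where $d$ is independent of $c$. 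Because the blocks of $\Pi$ are the $X$-orbits on $\Delta_1$, the group $X$ fixes each of them setwise; combined with the transitivity of $X$ on $c$ this gives $\sigma(c)=\{\pi\in\Pi:\pi\cap\Gamma(w_0)\neq\emptyset\}$ for any fixed $w_0\in c$, whence $d\leqslant|\Gamma(w_0)|=|V|$. On the other hand the quotient graph on $\Pi\cup\mathcal C$ is connected (being a quotient of the connected graph $\Gamma$), and $|\Pi|=|T|^{k-1}>1$, which rules out $d=1$ (else the quotient graph would be a disjoint union of stars centred in $\Pi$, forcing $|\Pi|=1$); so $2\leqslant d\leqslant|V|$. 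Now $\alt(\Pi)\leqslant A^\Pi$ is transitive on the set of all $d$-subsets of $\Pi$, so $\mathcal S$ consists of every $d$-subset, and
$$|\mathcal C|\ \geqslant\ |\mathcal S|\ =\ \binom{|\Pi|}{d}\ \geqslant\ \binom{|\Pi|}{2}$$
(the last step since $2\leqslant d\leqslant|V|\leqslant|\Pi|/2$). As $|\mathcal C|\leqslant|\Delta_2|=|T|^{k}=|T|\cdot|\Pi|$, we obtain $|\Pi|\leqslant2|T|+1$; but $|\Pi|=|T|^{k-1}$ with $k=|V|\geqslant3$ and $|T|\geqslant60$, so $|\Pi|\geqslant|T|^{2}>2|T|+1$ — a contradiction. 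Therefore $A^\Pi\not\geqslant\alt(\Pi)$.

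The only genuinely non-formal point in this scheme is the second paragraph: ruling out that $X$ acts trivially on $\Delta_1$, which I reduce to showing $\Gamma$ has no pair of vertices of $\Delta_2$ with the same neighbourhood, and ultimately to the module-theoretic fact $N_P(R)=Q$. Everything else is a direct appeal to Lemma~\ref{lem: action of G on D1} or the elementary counting estimate above.
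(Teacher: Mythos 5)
Your proof is correct, but it reaches the second (and harder) assertion by a genuinely different route than the paper. For the first assertion both arguments rest on Lemma~\ref{lem: action of G on D1} (uniqueness of the non-trivial $G$-invariant partition); you additionally rule out the degenerate possibility that $X$ acts trivially on $\Delta_1$, by computing neighbourhoods in the coset graph and showing $N_P(R)=Q$, so that distinct vertices of $\Delta_2$ have distinct neighbourhoods. The paper passes over this case silently, so your extra step is a legitimate (and correct) tightening rather than a detour; the projection argument for $N_P(R)=Q$ is sound since $R\leqslant N_k$ is $Q$-invariant and $Q$ permutes $T_1,\dots,T_{k-1}$ transitively. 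For the ``no $\alt(\Pi)$'' part the paper argues structurally: $X$ is semiregular on both orbits, $|X|=|\pi|$, the HS action of $G_\pi$ on $\pi$ together with Praeger's overgroup theorem forces $X\cong T$, and then a centraliser analysis (the subgroup $H=X[C_A(X),C_A(X)]$ of index at most two, local transitivity, factorial order computations, simplicity of the stabilisers) yields $X=1$. You instead count: each $X$-orbit $c$ on $\Delta_2$ determines the set $\sigma(c)$ of blocks of $\Pi$ met by the neighbourhood of any $w_0\in c$, of constant size $d$ with $2\leqslant d\leqslant |V|$, and if $A^\Pi\geqslant\alt(\Pi)$ then every $d$-subset of $\Pi$ occurs, forcing $\binom{|\Pi|}{2}\leqslant|\mathcal C|\leqslant|\Delta_2|=|T|\,|\Pi|$, impossible since $|\Pi|=|T|^{k-1}\geqslant|T|^2$. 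This is more elementary, avoids the identification $X\cong T$ entirely, and in fact never uses the intransitivity of $X$ on $\Delta_2$ --- only that $\Pi$ is $A$-invariant --- so it proves slightly more than the stated lemma. Two small points you should make explicit: $k=|V|\geqslant 3$ (immediate, since $\phi(Q)\supseteq\inn(T)$ forces $|Q|\geqslant 60$, ruling out $|V|\leqslant 4$; the paper's own ``since $k>2$'' is equally tacit), and $d\leqslant|\Pi|-2$ so that $\alt(\Pi)$ is indeed transitive on $d$-subsets (trivial here because $d\leqslant|V|$ while $|\Pi|=|T|^{k-1}$).
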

\begin{proof}
Lemma~\ref{lem: A qp on D1} shows that  $\Pi$ is the  unique  partition of $\Delta_1$ that is   preserved by $G$, hence the set of  orbits of $X$ is $\Pi$ and   $A^\Pi$ is primitive.  Now assume that $X$ is intransitive on $\Delta_2$. It follows that $X$ is semiregular on both $\Delta_1$ and $\Delta_2$. Further, we have $|X|=|\pi|$ and  $X $ is a normal subgroup of $J= \langle X,  \hat{N} , P\rangle $. By Lemma~\ref{lem: A qp on D1}  $\hat{N} P=G_\pi$ acts on $\pi$ as a primitive group of HS type, so $J$ is also primitive on $\pi$ and  \cite[Proposition 8.1]{praeger_1990}  shows that $J^\pi$ is $\alt(\pi)$, $\sym(\pi)$, has type SD or type HS. Of the possibilities,  only HS groups have normal subgroups of order $|X|=|T|$, so therefore $J^\pi$ is primitive of type HS and we have $X \cong T$. 

Suppose, for a contradiction, that $A/X = A^{\Pi}$ contains $\alt(\Pi)$. We claim that $C_A(X)$ contains a normal subgroup isomorphic to $\alt(\Pi)$.  Since $C_A(X) \cap X = Z(X)=1$, we have that $C_A(X) \cong C_A(X) X / X \unlhd A/X$. Hence the claim is true, unless $C_A(X)=1$. In this case, then $A$ embeds into $\aut(T)$.  The order of $\aut(T)$ divides $|T|!$, and yet $|\alt(\Pi)|=(|T|^{k-1})!/2$ divides $|A|$, which is a contradiction since $k>2$. 
Hence $C_A(X)$ contains a normal subgroup of index at most two that is isomorphic to $\alt(\Pi)$, thus $ [C_A(X),C_A(X)]\cong \alt(\Pi)$. Let $H=X [C_A(X),C_A(X)]$ and note that $|A:H| \leqslant 2$. Hence $|G : G\cap H| \leqslant 2$. Thus $|G_v : G_v \cap H| \leqslant 2$ and $|G_u : G_u \cap H| \leqslant 2$. In particular, both $G_v \cap H_v = G_v \cap H$ and $G_u \cap H_u=G_u \cap H$ are normal subgroups of $G_v$ and $G_u$ respectively. Since the valency of $\Gamma$ is at least three, and since $G_v$ and $G_u$ act $2$-transitively on $\Gamma(v)$ and $\Gamma(u)$ respectively, it must be that $G_v \cap H$ and $G_u \cap H$ are transitive on $\Gamma(v)$ and $\Gamma(u)$, respectively. Hence $H$ is locally transitive, so $H = \langle H_u, H_v \rangle$. Order considerations yield:
$$|H| = |X| | \alt(\Pi)| = |T| \frac{(|T|^{p^2-1})!}{2}$$
and $$|H_u| = |H_v | = \frac{|H|}{|T|^{p^2}} = \frac{|T| (|T|^{p^2-1})!}{2|T|^{p^2}} =  \frac{ (|T|^{p^2-1})!}{2|T|^{p^2-1}} = \frac{(|T|^{p^2-1}-1)!}{2} = |\alt(|\Pi|-1)|$$

Now $H_v \cap X=1=H_u \cap X$ since $X$ is semiregular on both $\Delta_1$ and $\Delta_2$, so we have that both $H_v$ and $H_u$ are isomorphic to subgroups of $\alt(\Pi)$ with the same order as a point stabiliser in $\alt(\Pi)$. It follows that $H_v \cong H_u \cong \alt(|\Pi|-1)$. In particular, both $H_v$ and $H_u$ are simple. If $H_v \cap C_H(X)$ is trivial, then $H_v$ is isomorphic to a subgroup of $H/C_H(X) \cong T$, which as mentioned above, is impossible. Hence $H_v \leqslant C_H(X)$. Similarly $H_u \leqslant C_H(X)$ and therefore $H \leqslant C_H(X)$, which means $H = C_H(X)$. This gives  $ X = X \cap H = X \cap C_H(X) = Z(X) = 1$, a contradiction.
\end{proof}

\begin{lemma}\label{lem: uns intransitive on D12}
There is a unique normal subgroup of $A$ that is intransitive on both $\Delta_1$ and $\Delta_2$.
\end{lemma}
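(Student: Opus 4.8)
The lemma asserts that the trivial subgroup is the unique normal subgroup of $A$ that is intransitive on both $\Delta_1$ and $\Delta_2$. Since $\Delta_1$ and $\Delta_2$ each have size $|N|=|T|^{k}>1$, the trivial subgroup certainly qualifies, so the content of the lemma is that no \emph{nontrivial} normal subgroup of $A$ is intransitive on both orbits, and this is what I would prove. So suppose $1\neq X\trianglelefteq A$ is intransitive on both. By Lemma~\ref{lem:not alt}, $X$ is semiregular on $\Delta_1$ and on $\Delta_2$, the $X$-orbits on $\Delta_1$ are exactly the blocks of $\Pi$, $X\cong T$ (so $|X|=|\pi|$), and $A^{\Pi}$ does not contain $\alt(\Pi)$. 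Since $G^{\Pi}$ is primitive of type SD (Lemma~\ref{lem: action of G on D1}) and $G^{\Pi}\leqslant A^{\Pi}$, Lemma~\ref{lem: overgroup of sd} then gives $\soc(A^{\Pi})=\soc(G^{\Pi})=N^{\Pi}\cong T^{k}$ with $A^{\Pi}$ of type SD; moreover $X$ lies in the kernel $K$ of the action of $A$ on $\Pi$.

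The heart of the argument is to pin $X$ down. As $X$ is intransitive on $\Delta_1$ and $X\cap G\trianglelefteq G$, quasiprimitivity of $G$ on $\Delta_1$ forces $X\cap G=1$, so conjugation gives a homomorphism $G\to\aut(X)\cong\aut(T)$ with kernel $C_G(X)\trianglelefteq G$. This kernel is nontrivial, since $G$ cannot embed in $\aut(T)$: indeed $|G|=|N|\,|P|=|T|^{k}|P|\geqslant|T|^{4}$, whereas $|\aut(T)|=|T|\,|\Out(T)|<|T|^{2}$. Because $N$ is the unique minimal normal subgroup of the TW-type quasiprimitive group $G$, every nontrivial normal subgroup of $G$ contains $N$, so $N\leqslant C_G(X)$ and $[N,X]=1$. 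Now $N$ acts regularly on $\Delta_1$, so identifying $\Delta_1$ with $N$ (on which $N$ acts by right translation) we have $X\leqslant C_{\sym(\Delta_1)}(N)$, the group of left translations, which is isomorphic to $N$ and on which $G$ acts through $G/N\cong P$ via the conjugation action of $P$ on $N$. Thus $X$ corresponds to a $P$-invariant subgroup of $N\cong T^{k}$ isomorphic to $T$; as $P$ permutes the $k$ simple direct factors of $N$ transitively, such a subgroup is a full diagonal subgroup of $N$, and by the argument in the proof of Lemma~\ref{lem: action of G on D1} (uniqueness of the extension $\hat{\phi}$ of $\phi$ to $P$) the only $P$-invariant subgroup of $N$ isomorphic to $T$ is $\hat{N}=C_N(V\ker\phi)$. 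Hence $X$ is the group of left translations of $\Delta_1\cong N$ by the elements of $\hat{N}$; in particular $X$ is uniquely determined, so at most one nontrivial normal subgroup of $A$ is intransitive on both orbits.

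It remains to eliminate this candidate $X$, and this is the step I expect to be the main obstacle. I would work with the action on $\Delta_2$. Since $X$ centralises $N$ it permutes the set $\Sigma$ of the $k$ orbits of $N$ on $\Delta_2$, and because $X\trianglelefteq A$ the $X$-orbits on $\Sigma$ form a $G$-invariant partition; as $G$ acts on $\Sigma$ as the primitive group $P$ on $[P:Q]$ of degree $k$, the group $X$ is either trivial or transitive on $\Sigma$. Transitivity would make $X\cong T\cong\PSL(2,q)$ have a subgroup of index $k=q^{2}$, which is impossible as $q^{2}$ does not divide $|\PSL(2,q)|$; hence $X$ fixes every $N$-orbit $O\subseteq\Delta_2$ setwise. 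On such an $O$ the group $X$ acts semiregularly while centralising the transitive action of $N$ (whose point stabilisers have order $k$), so $X$ embeds in $C_{\sym(O)}(N^{O})\cong N_N(N_v)/N_v$ for $v\in O$; comparing this with the normaliser structure inside $N\cong T^{k}$, together with the fact that $X\cong T$ is normal in $A$ while the normal closure of $N$ in $A$ strictly contains $N$, should yield a contradiction. A possibly cleaner alternative is to check directly from the description $\Gamma=\mathrm{Cos}(G,P,RQ)$ that left translation of $\Delta_1\cong N$ by a non-identity element of $\hat{N}$ fails to preserve adjacency in $\Gamma$, so that $X\not\leqslant\aut(\Gamma)$. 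Either way no such $X$ exists, and the trivial subgroup is the unique normal subgroup of $A$ intransitive on both $\Delta_1$ and $\Delta_2$. The genuine difficulty is precisely in extracting this last contradiction — that is, in showing that the uniquely-determined candidate "left translation by $\hat{N}$" cannot be realised by automorphisms of $\Gamma$.
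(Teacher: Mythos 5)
Your reduction is a genuinely different route from the paper's: the paper takes $X$ maximal intransitive on both parts, passes to the quotient graph $\Gamma_X$, and runs the Giudici--Li--Praeger machinery (their Theorem~1 and Theorem~1.3, the classification of the SD--PA examples, and Lemma~\ref{lem:not alt} to exclude $\alt(\Pi)$), whereas you argue directly inside $A$: $X\cap G=1$, $N\leqslant C_G(X)$, hence $X^{\Delta_1}$ lies in the centraliser of the regular group $N^{\Delta_1}$ and is forced to be left translation by the unique $P$-invariant copy $\hat N$ of $T$ in $N$. That pinning-down is essentially sound (though note you quote ``$X$ is semiregular'' and ``$X\cong T$'' from Lemma~\ref{lem:not alt}, whose statement only gives the orbit partition $\Pi$ and the $\alt(\Pi)$ exclusion; those facts sit inside its proof and would need to be re-derived or the lemma restated). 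However, the proof is incomplete precisely where you say it is: you never eliminate the candidate $X$ with $X^{\Delta_1}=\lambda(\hat N)$, and the lemma is exactly the assertion that no such $X$ exists. A reduction to a single suspect is not a contradiction.

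Moreover, the two escape routes you sketch do not close the gap as stated. For the first: with $v\in O$ an $N$-orbit on $\Delta_2$ we have $N_v=R$, and $C_{\sym(O)}(N^{O})\cong N_N(R)/R$ contains $T_kR/R\cong T$ (since $T_k$ centralises $R\leqslant N_k$), so embedding $X\cong T$ into this centraliser produces no contradiction at all; the appeal to ``the normal closure of $N$ in $A$ strictly contains $N$'' is unsubstantiated. For the second: ruling out the candidate by an adjacency check is not a matter of testing one permutation, since you must show that \emph{no} extension of the map $\lambda(\hat N)$ on $\Delta_1$ to $\Delta_2$ is an automorphism normalised by $A$, which requires control of the action on $\Delta_2$ that you have not set up. Finally, your exclusion of ``$X$ transitive on $\Sigma$'' uses $T\cong\PSL(2,q)$ and $k=q^2$, which is specific to the family $q^2:\SL(2,q)$, while the lemma is proved in the generality of the Section~3 Hypothesis (e.g.\ $q^4:\Sp(4,q)$, where $k=q^4$ divides $|T|$ and the divisibility argument fails). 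So the decisive final step is missing, and the paper's quotient-graph argument (or some replacement for it) is still needed.
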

\begin{proof}
Note that the identity subgroup satisfies the conclusion of the theorem, so we may assume for a contradiction that there is a non-trivial normal subgroup $X$ of $A$ that is intransitive on $\Delta_1$ and $\Delta_2$. Further, after replacing $X$ with an overgroup, we may assume  that $X$ is maximal with this property. Hence we may apply \cite[Theorem 1]{giudici_li_praeger_2003} and consider the outcomes (i)--(iii) separately. 

In the  case (i), we have $\Gamma_X = K_{k,k}$. Since $G$ acts quasiprimitively on $\Delta_1$, $G$ is isomorphic to its image in $\aut(\Gamma_X)$. This implies that  $|T|^{k}$  divides  $|\sym(k)|$, and a contradiction is obtained by considering the power of $p$ that divides each group.

In the  case (ii),  we have that $A^\Pi$ is an overgroup of a primitive group of SD type. By Lemma~\ref{lem:not alt} $A^\Pi$ does not contain the alternating group on $\Pi$. Hence by Lemma~\ref{lem: overgroup of sd}  $A^\Pi$ must be of type SD. By \cite[Theorem 1.3]{giudici_li_praeger_2003} it must be that $A^{\Delta_2/X}$ is of type PA. According to \cite[Theorem 1.2]{giudici_li_praeger_2006_2}, $\Gamma_X$ arises from either \cite[Construction 3.3]{giudici_li_praeger_2006_2} or from a normal quotient of such a graph as in \cite[Construction 3.10]{giudici_li_praeger_2006_2}. Since all such graphs are not  regular, we have a contradiction.

In case (iii), $A$ is quasiprimitive on exactly one of $\Delta_1/X = \Pi$ or $\Delta_2/X$. Since  $G$ is primitive on $\Pi$, $A^\Pi$ is primitive (and therefore quasiprimitive) on $\Pi$. Hence $A$ is not quasiprimitive on $\Delta_2 / X$. By \cite[Theorem 1.3]{giudici_li_praeger_2003}, the quasiprimitive type of $A^\Pi$ is either HA, HS, AS, PA or TW. Since $G^\Pi$ is primitive of SD type, Lemma~\ref{lem: overgroup of sd} implies that  $A^\Pi$ has type AS and $A^\Pi$ contains $\alt(\Pi)$. Lemma~\ref{lem:not alt} delivers a contradiction.
\end{proof}

\begin{lemma}
\label{lem: A qp on D1}
There is no normal subgroup  of $A$ that is intransitive on $\Delta_1$.
\end{lemma}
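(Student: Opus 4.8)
\emph{Proof plan.} I would argue by contradiction: suppose $1\neq X$ is a normal subgroup of $A$ that is intransitive on $\Delta_1$. Since the trivial subgroup is intransitive on both $\Delta_1$ and $\Delta_2$, Lemma~\ref{lem: uns intransitive on D12} forces $X$ to be transitive on $\Delta_2$. By Lemma~\ref{lem:not alt} the orbits of $X$ on $\Delta_1$ form the partition $\Pi$, and each block of $\Pi$ has size $|T|$ (cf.\ the description of $G_\pi^\pi$ in Lemma~\ref{lem: action of G on D1}). Moreover $|\Delta_2|=|G:RQ|=|G|/|P|=|N|=|T|^k$, using $RQ\cong P$ and $G=NP$ with $N\cap P=1$. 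Fixing the edge $\{u,v\}$ with $u\in\Delta_1$, $v\in\Delta_2$ (the cosets $P$ and $RQ$), I would record that $[X:X_u]=|T|$ while $[X:X_v]=|T|^k$, since $u^X$ is a block of $\Pi$ and $v^X=\Delta_2$.

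Next I would invoke local $2$-transitivity. Since $A_u\geqslant G_u=P$ and $P$ acts $2$-transitively on $\Gamma(u)$, the group $A_u^{\Gamma(u)}$ is $2$-transitive, hence primitive; likewise $A_v^{\Gamma(v)}$ is primitive, using $A_v\geqslant G_v=RQ$. As $X\unlhd A$, the groups $X_u^{\Gamma(u)}$ and $X_v^{\Gamma(v)}$ are normal in these primitive groups, so each is trivial or transitive. If $X_u$ fixed some $w\in\Gamma(u)$, then $X_u\leqslant X_w$ would give $|T|=[X:X_u]\geqslant[X:X_w]=|T|^k$, which is impossible; hence $X_u$ has no fixed point on $\Gamma(u)$, so $X_u^{\Gamma(u)}$ is nontrivial and therefore transitive, whence $[X_u:X_{uv}]=|\Gamma(u)|=k$.

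Finally I would analyse $X_v^{\Gamma(v)}$. It cannot be transitive: otherwise, since $X$ is transitive on $\Delta_2$ and $\Gamma$ has no isolated vertices, a routine connectivity argument (given two vertices of $\Delta_1$, choose neighbours in $\Delta_2$, map one neighbour to the other by an element of $X$, then correct within the neighbourhood using transitivity of the corresponding stabiliser) would show $X$ transitive on $\Delta_1$, contrary to hypothesis. Hence $X_v^{\Gamma(v)}=1$, i.e.\ $X_v$ fixes $\Gamma(v)$ pointwise, so $X_v\leqslant X_u$ and thus $X_v=X_u\cap X_v=X_{uv}$. Combining the index computations, $|T|^k=[X:X_v]=[X:X_u]\,[X_u:X_{uv}]=|T|\cdot k$, so $|T|^{k-1}=k$; since $|T|\geqslant 60$ and $k\geqslant 4$ this is absurd, and the contradiction proves the lemma. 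The step needing genuine care is the middle one: forcing $X_u^{\Gamma(u)}$ to be transitive by playing the index estimate $[X:X_u]=|T|<|T|^k=[X:X_v]$ against the dichotomy for normal subgroups of primitive groups; everything else is bookkeeping with indices and the standard connectivity argument.
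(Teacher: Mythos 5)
Your proposal is correct, but it reaches the contradiction by a longer route than the paper. Your skeleton is sound: Lemma~\ref{lem: uns intransitive on D12} forces $X$ to be transitive on $\Delta_2$, Lemma~\ref{lem:not alt} (together with Lemma~\ref{lem: action of G on D1}) gives $[X:X_u]=|\pi|=|T|$ and $[X:X_v]=|\Delta_2|=|T|^k$, the trivial-or-transitive dichotomy for $X_u^{\Gamma(u)}\unlhd A_u^{\Gamma(u)}$ is applied correctly, the connectivity argument ruling out transitivity of $X_v^{\Gamma(v)}$ is the standard one, and $|T|^{k-1}=k$ is indeed impossible. The paper, however, gets the same numerical clash with a bare double count: the block $u^Y=\pi$ has exactly $k|T|$ edges to $\Delta_2$ (each of its $|T|$ vertices has valency $k$), while the edges $\{u^y,v^y\}$ for $y\in Y$ end at every vertex of $\Delta_2$ by transitivity of $Y$ on $\Delta_2$, giving at least $|T|^k$ distinct edges out of $\pi$ --- contradiction, with no appeal to local $2$-transitivity, primitivity of the local actions, or the connectivity lemma. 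Your version buys an exact equation $|T|^k=k|T|$ and the extra structural fact that $X_v$ would have to fix $\Gamma(v)$ pointwise, but these come at the cost of machinery the count makes unnecessary; indeed, once you have $[X:X_u]=|T|$ and $[X:X_v]=|T|^k$, counting the edges incident with $u^X$ already finishes the job. One shared gloss worth noting: as stated the lemma would be contradicted by the trivial subgroup, so both you and the paper implicitly read it as concerning nontrivial normal subgroups, and one should also observe (as your citation of Lemma~\ref{lem:not alt} silently does) that such an $X$ cannot fix $\Delta_1$ pointwise, so its orbit partition really is the nontrivial partition $\Pi$.
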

\begin{proof}
Assume for a contradiction that $Y$ is a normal subgroup  of $A$ that is intransitive on $\Delta_1$. By the previous lemma, $Y$ must be transitive on $\Delta_2$. By Lemma~\ref{lem: action of G on D1}, the set of orbits of $Y$ on $\Delta_1$ must be the set $\Pi$, and therefore $|u^Y| = |T|$.   Since each vertex in $u^Y$ has $k$ neighbours in $\Delta_2$,  there are exactly $k|T|$ edges between $\pi$ and $\Delta_2$. On the other hand,  $\{u^y,v^y\}$ is an edge between $u^Y$ and $\Delta_2$ for all $y\in Y$.  Since $Y$ is transitive on $\Delta_2$, there are at least $|\Delta_2|= |T|^k$ edges between $\Delta_2$ and $u^Y$, a contradiction.
\end{proof}

\begin{lemma}\label{lem: A qp only on D1}
The action of $A$ on $\Delta_2$ is not quasiprimitive and the action of $A$ on $\Delta_1$ is quasiprimitive of type TW.
\end{lemma}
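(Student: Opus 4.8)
The plan is to obtain both conclusions at once by proving the single statement that $N$ is a normal subgroup of $A$. Suppose this is known. Any minimal normal subgroup $M$ of $A$ that is contained in $N$ is normalised by $G$, hence is one of the $G$-invariant subgroups $1$, $\hat{N}$, $N$ of $N$ identified in the proof of Lemma~\ref{lem: action of G on D1}; it is nontrivial, and it cannot equal $\hat{N}$, since $\hat{N}$ is intransitive on $\Delta_1$ (its orbits are the blocks of $\Pi$) while minimal normal subgroups of a quasiprimitive group are transitive; hence $M=N$, so $N$ is a minimal normal subgroup of $A$. As $N\cong T^{k}$ is nonabelian, regular on $\Delta_1$, and not simple (since $k=q^{2}\geqslant 2$), the action of $A$ on $\Delta_1$ is quasiprimitive of type TW. On the other hand, by Lemma~\ref{lem: gamma connected} the normal subgroup $N$ of $A$ has $k>1$ orbits on $\Delta_2$, so $A$ is not quasiprimitive on $\Delta_2$; this $N$ will also serve as the subgroup $Y$ of Theorem~\ref{mainthm}(iii).

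So the task reduces to proving $N\unlhd A$. First I would note that $A$ is quasiprimitive on $\Delta_1$ by Lemma~\ref{lem: A qp on D1}, and that every partition of $\Delta_1$ preserved by $A$ is preserved by $G$, so by Lemma~\ref{lem: action of G on D1} the group $A$ is either primitive on $\Delta_1$ or imprimitive with $\Pi$ as its unique nontrivial block system. In the imprimitive case I apply Lemma~\ref{lem: imprim overgroup of tw} with $\mathcal{B}=\Pi$: outcome (2) there would force $G^{\Pi}$ to be of type PA, contradicting the fact established in Lemma~\ref{lem: action of G on D1} that $G^{\Pi}$ is of type SD; hence outcome (1) holds, so $\soc(A)=\soc(G)=N$ and in particular $N\unlhd A$.

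It remains to treat the case that $A$ is primitive on $\Delta_1$, where I would apply Lemma~\ref{lem: prim overgroup of tw} and eliminate every outcome except ``$A$ of type TW with $\soc(A)=N$''. The outcome $A\in\{\alt(\Delta_1),\sym(\Delta_1)\}$ is ruled out by a neighbourhood argument: set $n:=|T|^{k}$; then $A$ is faithful and transitive of degree $n$ on each of $\Delta_1$ and $\Delta_2$, so (as $n\neq 6$) it acts naturally on each and the two actions are equivalent, whence $\Gamma(u)\subseteq\Delta_2$ corresponds to a nonempty proper $A_u$-invariant subset of $\Delta_1$; but a point stabiliser in $\alt(n)$ or $\sym(n)$ is highly transitive on the remaining $n-1$ points, forcing $|\Gamma(u)|\in\{1,n-1,n\}$, which is impossible since $1<k=q^{2}<n-1$. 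For the outcomes of types PA, HC, SD and CD the key is a count of nonabelian composition factors: in each of these types $A/\soc(A)$ embeds in a group built from $\Out(S)$ (which is soluble) and symmetric groups in a wreath-type fashion, so $N\cong T^{k}$, which has $k=q^{2}\geqslant 16$ nonabelian composition factors, cannot embed in $A/\soc(A)$. Since $N\cap\soc(A)$ is $G$-invariant, hence lies in $\{1,\hat N,N\}$, and the two smaller options would exhibit $T^{k}$ or $T^{k-1}$ inside $A/\soc(A)$, we conclude $N\leqslant\soc(A)$; I would then argue that a copy of $T^{k}$ lying inside the socle of a group of type PA, HC, SD or CD, regular on $\Delta_1$ and normalised by $G$, in fact forces $A$ to be of type TW with $\soc(A)=N$, using the precise structure of the socle action together with the facts that $N$ is regular on $\Delta_1$, that $N\cap\soc(A)$ is $G$-invariant, and that $|\Gamma(u)|=q^{2}$ is small.

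The main obstacle is this last step, disposing of the primitive outcomes of types PA, HC, SD, and CD: the composition-factor count by itself reduces the problem to $N\leqslant\soc(A)$ but does not finish types CD and HC, where several copies of an alternating group can occur in $A/\soc(A)$, and one must combine it with a careful examination of how a regular, $G$-normalised copy of $T^{k}$ can sit in the socle. Everything else — the imprimitive case, the alternating/symmetric case, and the final deduction of both assertions from $N\unlhd A$ — is short.
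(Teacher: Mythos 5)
Your reduction of both conclusions to the single claim $N\unlhd A$, and your treatment of the imprimitive case, are sound and in the latter case coincide with the paper's own argument (Lemma~\ref{lem: imprim overgroup of tw} applied to $\mathcal B=\Pi$, with the PA outcome excluded via Lemma~\ref{lem: overgroup of sd} and Lemma~\ref{lem: action of G on D1}); your exclusion of $\alt(\Delta_1)$ and $\sym(\Delta_1)$ in the primitive case is also essentially the paper's argument. The genuine gap is the rest of the primitive case, i.e.\ the outcomes PA, HC, SD, CD of Lemma~\ref{lem: prim overgroup of tw}, and you acknowledge it is unfinished. Two specific problems: first, the ``count of nonabelian composition factors'' is not a valid tool, because the composition factors of a subgroup of $A/\soc(A)$ need not occur among the composition factors of $A/\soc(A)$ --- a group of the form $\Out(S)\wr\sym(m)$ has only alternating and soluble composition factors, yet it contains $T^{k}$ as soon as $m$ is large enough; any argument of this kind must instead compare minimal faithful permutation degrees (or orders) against the constraint $|\Delta_1|=|T|^{k}$, which you do not do. Second, even granting $N\leqslant\soc(A)$, the decisive step (that a regular, $G$-normalised copy of $T^{k}$ inside the socle forces type TW) is only promised, and it cannot be a purely group-theoretic bookkeeping exercise: quasiprimitive TW groups with regular socle do occur inside primitive groups of PA type, so graph-theoretic input is indispensable. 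This is exactly where the paper invokes published classification results --- \cite[Theorems 1.2 and 1.3]{giudici_li_praeger_2003} to rule out HC, CD (and SD) since $s\geqslant 2$, \cite[Theorem 1.2]{giudici_li_praeger_2006} (the Hamming-graph vertex--maxclique characterisation) when $A^{\Delta_2}$ is not quasiprimitive and $A^{\Delta_1}$ has type PA, \cite[Theorem 1.2]{giudici_li_praeger_2006_2} and \cite[Theorem 2.1]{EricMichael} when $A^{\Delta_2}$ is quasiprimitive, and the socle-regularity contradiction in the TW subcase --- and your proposal offers no substitute for these inputs.

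A second, smaller gap: your final deduction ``$N$ minimal normal in $A$, nonabelian, regular and not simple, hence $A^{\Delta_1}$ has type TW'' is not a valid implication by itself, since a quasiprimitive group may have \emph{two} regular minimal normal subgroups isomorphic to $T^{k}$, in which case the type is HC, not TW. You must either show $N$ is the unique minimal normal subgroup of $A$ (equivalently $N=\soc(A)$, which the imprimitive case does give you via $\soc(A)=\soc(G)$), or exclude HC using $s\geqslant 2$ and \cite[Theorems 1.2 and 1.3]{giudici_li_praeger_2003}. Finally, a minor slip: $\hat N$ is not normal in $N$, so the case $N\cap\soc(A)=\hat N$ cannot arise and ``$T^{k-1}$ inside $A/\soc(A)$'' makes no sense as a quotient; this does not affect your logic, since $N\cap\soc(A)\in\{1,N\}$ follows directly.
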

\begin{proof}
Lemma~\ref{lem: A qp on D1} allows us to consider the inclusion $G^{\Delta_1} \leqslant A^{\Delta_1}$ of quasiprimitive groups. We consider two cases according to whether  $A^{\Delta_1}$ is primitive or not.

\textbf{Case 1:} $A^{\Delta_1}$ is primitive.

Since $s\geqslant 2$, $A^{\Delta_1}$ cannot be of type HC or CD by \cite[Theorem 1.2]{giudici_li_praeger_2003}. Hence Lemma~\ref{lem: prim overgroup of tw} implies that $A^{\Delta_1}$ contains the alternating group on $\Delta_1$, has type TW or has type PA. Since $\Delta_1$ and $\Delta_2$ have the same cardinality, if $A^{\Delta_1}$ contains the alternating group, then $A^{\Delta_2}$ also contains the alternating group, and $\Gamma$ is the complete bipartite graph or the empty graph, a contradiction.   Hence we may assume that $A^{\Delta_1}$ is  primitive of type PA or TW. 

Suppose that $A^{\Delta_2}$ is quasiprimitive. Since $\Gamma$ is regular, \cite[Theorem 1.2]{giudici_li_praeger_2006_2} shows that the type  of $A^{\Delta_2}$ must be the same type as $A^{\Delta_1}$.  If $A^{\Delta_1}$ is TW, then Lemma~\ref{lem: prim overgroup of tw} shows that the socle of $A^{\Delta_1}$ is equal to the socle of $G^{\Delta_1}$. Since both groups act faithfully on $\Delta_1$, this implies that $\soc(G)=\soc(A)$ must act transitively on $\Delta_2$, since  $A^{\Delta_2}$ is quasiprimitive. Hence $\soc(A)=\soc(G)$  acts regularly on $\Delta_2$, a contradiction to Lemma~\ref{lem: action of G on D1} which says $\soc(G)$ is intransitive  on $\Delta_2$. Hence the type of $A$ (on both $\Delta_1$ and $\Delta_2$) is PA. Since $s\geqslant 2$ and since $A^{\Delta_1}$ is primitive, we obtain a contradiction from \cite[Theorem 2.1]{EricMichael}.

We may now assume that $A^{\Delta_2}$ is not quasiprimitive. If $A^{\Delta_1}$ is of type PA, then  \cite[Theorem 1.2]{giudici_li_praeger_2006} shows that $\Gamma$ is  the vertex-maxclique incidence graph of the Hamming graph $H(\ell,n)$ for some integers $\ell$ and $n$.  Since the vertex-maxclique incidence graph of $H(\ell,n)$ has valencies $\ell$ and $n$,  we have $\ell=n=k$. Hence the number of vertices in $\Gamma$ is $2 n^{\ell-1}\ell = 2 n^n = 2 k^{2k}$, a contradiction since $k$ is a prime power. Hence   $A^{\Delta_1}$ is primitive of TW type and $A^{\Delta_2}$ is not quasiprimitive.

\textbf{Case 2:}  $A^{\Delta_1}$ is imprimitive, and therefore preserves partition  $\Pi$. Since $G^{\Pi}$ is primitive of SD type by Lemma~\ref{lem: action of G on D1}, Lemma~\ref{lem: overgroup of sd} shows $A^{\Pi}$ cannot have type PA. Hence Lemma~\ref{lem: imprim overgroup of tw} (1) must hold, so that  $\soc(G)=\soc(A)$ and $A^{\Delta_1}$ has type TW. Further, since $\soc(G)=\soc(A)$, $A^{\Delta_2}$ is  not quasiprimitive. 
\end{proof}

\section{The proof of Theorem~\ref{mainthm}.}
%{\color{red}{some notes proof of thm 1.1}}
Let $P$ be one of the groups $q^2:\SL(2,q)$, $q^4:\Sp(4,q)$ or $3^6:\SL(2,13)$ (viewed as $2$-transitive groups) which satisfy the hypothesis of Section~3. 
Let $G$ be the twisted wreath product constructed in Section~3 and let $\Gamma$ be the graph constructed in Lemma~\ref{lem: gamma connected}. 
From Lemma~\ref{lem: gamma connected} we have that $\Gamma$ is a connected graph of valency $q_0^2$ where $q_0=q$, $q_0=q^2$ or $q_0=3^3$ in the respective cases. 
Let $\Delta_1$ and $\Delta_2$ denote the two parts of $\Gamma$ and let $A$ be the subgroup of $\aut(\Gamma)$ fixing the two parts. Lemma~\ref{lem: A qp only on D1} shows that $A$ is not quasiprimitive on $\Delta_2$, and since $A$ has index at most two in $\aut(\Gamma)$, this proves $A=\aut(\Gamma)$ and establishes part (1) of Theorem~\ref{mainthm}. Lemma~\ref{lem: A qp on D1} shows that part (2) of Theorem~\ref{mainthm} holds. Finally,  Lemma~\ref{lem: A qp only on D1} shows that $A$ is not quasiprimitive on $\Delta_2$, and so there exists an intransitive normal subgroup $Y$ of $A$. Since the orbits of $Y$ are preserved by $G$, Lemma~\ref{lem: action of G on D1} shows the orbits form the partition $\Pi$, and hence $\Gamma_Y \cong K_{1,q^2}$. Thus part (3) of Theorem~\ref{mainthm} holds and the proof of the theorem is complete.

% by this thm ... and this ... -> conclude with a summary
% final comments

\bibliographystyle{siam}
\bibliography{bibliography}

\begin{thebibliography}{10}

\bibitem{alperin_bell_1995}
{\sc J.~L. Alperin and R.~B. Bell}, {\em Groups and representations}, Springer,
  1995.

\bibitem{baddeley}
{\sc R.~W. Baddeley}, {\em Primitive permutation groups with a regular
  non-abelian normal subgroup}, Proc. London Math. Soc., 67 (1993),
  pp.~547--595.

\bibitem{baddeley2arc}
\leavevmode\vrule height 2pt depth -1.6pt width 23pt, {\em Two-arc transitive
  graphs and twisted wreath products}, Journal of Algebraic Combinatorics, 2
  (1993), pp.~215--237.

\bibitem{baddeley_praeger_2003}
{\sc R.~W. Baddeley and C.~E. Praeger}, {\em On primitive overgroups of
  quasiprimitive permutation groups}, Journal of Algebra, 263 (2003),
  pp.~294--344.

\bibitem{MR1484478}
{\sc W.~Bosma, J.~Cannon, and C.~Playoust}, {\em The {M}agma algebra system.
  {I}. {T}he user language}, Journal of Symbolic Computation, 24 (1997),
  pp.~235--265.

\bibitem{cameron}
{\sc P.~J. Cameron}, {\em Permutation Groups}, no.~45 in London Mathematical
  Society Student Texts, Cambridge University Press, 1999.

\bibitem{fang_li_praeger_2004}
{\sc X.~G. Fang, C.~H. Li, and C.~E. Praeger}, {\em The locally $2$-arc
  transitive graphs admitting a {R}ee simple group}, Journal of Algebra, 282
  (2004), pp.~638--666.

\bibitem{giudici_li_praeger_2003}
{\sc M.~Giudici, C.~H. Li, and C.~E. Praeger}, {\em Analysing finite locally
  $s$-arc transitive graphs}, Transactions of the American Mathematical
  Society, 356 (2003), pp.~291--317.

\bibitem{giudici_li_praeger_2005}
\leavevmode\vrule height 2pt depth -1.6pt width 23pt, {\em Some locally $3$-arc
  transitive graphs constructed from triality}, Journal of Algebra, 285 (2005),
  pp.~11--28.

\bibitem{giudici_li_praeger_2006}
\leavevmode\vrule height 2pt depth -1.6pt width 23pt, {\em Characterizing
  finite locally $s$-arc transitive graphs with a star normal quotient},
  J.~Group Theory, 9 (2006), pp.~641--658.

\bibitem{giudici_li_praeger_2006_hom_fact}
\leavevmode\vrule height 2pt depth -1.6pt width 23pt, {\em Locally $2$-arc
  transitive graphs, homogeneous factorizations, and partial linear spaces},
  Journal of Combinatorial Designs, 14 (2006), pp.~139--148.

\bibitem{giudici_li_praeger_2006_2}
\leavevmode\vrule height 2pt depth -1.6pt width 23pt, {\em Locally $s$-arc
  transitive graphs with two different quasiprimitive actions}, Journal of
  Algebra, 299 (2006), pp.~863--890.

\bibitem{giudici_li_praeger_2007}
\leavevmode\vrule height 2pt depth -1.6pt width 23pt, {\em A new family of
  locally $5$-arc transitive graphs}, European Journal of Combinatorics, 28
  (2007), pp.~533--548.

\bibitem{EricMichael}
{\sc M.~Giudici and E.~Swartz}, {\em Locally {$s$}-arc-transitive graphs
  arising from product action}, In preparation.

\bibitem{leemans_2009}
{\sc D.~Leemans}, {\em Locally $s$-arc-transitive graphs related to sporadic
  simple groups}, Journal of Algebra, 322 (2009), pp.~882--892.

\bibitem{neumann}
{\sc B.~H. Neumann}, {\em Twisted wreath products of groups}, Arch. Math., 14
  (1963), pp.~1--6.

\bibitem{praeger_1990}
{\sc C.~E. Praeger}, {\em The inclusion problem for finite primitive
  permutation groups}, Proceedings of the London Mathematical Society, s3-60
  (1990), pp.~68--88.

\bibitem{praeger}
\leavevmode\vrule height 2pt depth -1.6pt width 23pt, {\em An {O'N}an-{S}cott
  theorem for finite quasiprimitive permutation groups and an application to
  2-arc transitive graphs}, J.~Lond.~Math.~Soc., 47 (1993), pp.~227--239.

\bibitem{praeger_1997}
\leavevmode\vrule height 2pt depth -1.6pt width 23pt, {\em Finite
  quasiprimitive graphs}, Surveys in Combinatorics, 1997,  (1997), pp.~65--86.

\bibitem{praeger_qp_inclusions_2003}
\leavevmode\vrule height 2pt depth -1.6pt width 23pt, {\em Quotients and
  inclusions of finite quasiprimitive permutation groups}, Journal of Algebra,
  269 (2003), pp.~329--346.

\bibitem{PraegerSchneider}
{\sc C.~E. Praeger and C.~Schneider}, {\em Permutation groups and cartesian
  decompositions}, no.~449 in London Mathematical Society Lecture Note Series,
  Cambridge University Press, 2018.

\bibitem{scott}
{\sc L.~L. {Scott}}, {\em {Representations in characteristic p}}.
\newblock {Finite groups, Santa Cruz Conf. 1979, Proc. Symp. Pure Math. 37,
  319--331}, 1980.

\bibitem{suzuki}
{\sc M.~Suzuki}, {\em Group Theory I}, Springer, Berlin, 1982.

\bibitem{swartz_2012}
{\sc E.~Swartz}, {\em The locally $2$-arc transitive graphs admitting an almost
  simple group of {S}uzuki type}, Journal of Combinatorial Theory, Series A,
  119 (2012), pp.~949--976.

\bibitem{tutte_1947}
{\sc W.~T. Tutte}, {\em A family of cubical graphs}, Mathematical Proceedings
  of the Cambridge Philosophical Society, 43 (1947), pp.~459--474.

\bibitem{tutte_1959}
\leavevmode\vrule height 2pt depth -1.6pt width 23pt, {\em On the symmetry of
  cubic graphs}, Canadian J.~Math., 11 (1959), pp.~621--624.

\bibitem{bon_stellmacher_2015}
{\sc J.~van Bon and B.~Stellmacher}, {\em Locally $s$-transitive graphs},
  Journal of Algebra, 441 (2015), pp.~243--293.

\bibitem{weiss_1981}
{\sc R.~Weiss}, {\em The nonexistence of $8$-transitive graphs}, Combinatorica,
  1 (1981), pp.~309--311.

\end{thebibliography}

\end{document}